\documentclass[11pt]{amsart}
\usepackage{amssymb,latexsym, amscd, a4wide}
\usepackage[all]{xy}
\usepackage{graphicx}
\usepackage{mathrsfs}
\usepackage{amsmath}
\usepackage{pb-diagram}
\usepackage{enumerate}


\vfuzz2pt 

 \newtheorem{theorem}{Theorem}[section]
\newtheorem{Thm}{Theorem}[section]
 \newtheorem{cor}[theorem]{Corollary}

 \newtheorem{lemma}[theorem]{Lemma}
\newtheorem{lemma-definition}[theorem]{Lemma-Definition}
\newtheorem{proposition-definition}[theorem]{Proposition-Definition}
\newtheorem{Lem}[theorem]{Lemma}

 \newtheorem{proposition}[theorem]{Proposition} \theoremstyle{definition}
 \newtheorem{definition}[theorem]{Definition} \theoremstyle{definition}
\newtheorem{defi}[theorem]{Definition} \theoremstyle{definition}
 \theoremstyle{definition}
 \newtheorem{example}[theorem]{Example}
 \theoremstyle{remark}
 \newtheorem{rem}[theorem]{Remark}
  \newtheorem{rems}[theorem]{Remarks}
\newtheorem{rem*}[theorem]{Remark}
 \numberwithin{equation}{section}

\newcommand{\tU}{{\widetilde{U}_q}}
\newcommand{\tW}{{\tilde{\W}}}

\newcommand{\res}{\operatorname{res}}
\newcommand{\Ker}{\operatorname{Ker}}

\newcommand{\Hom}{\operatorname{Hom}}

\newcommand{\Jm}{\operatorname{Im}}

\newcommand{\rank}{\operatorname{rank}}

\newcommand{\uobar}{\overline{\uo}}

\newcommand{\MOD}{\ensuremath{{\operatorname{Mod}}}}


\newcommand{\IndBP}{\ensuremath{Ind^{P_q}_{B_q}}}

\newcommand{\IndPQ}{\ensuremath{Ind^{Q_q}_{P_q}}}

\newcommand{\IndBQ}{\ensuremath{Ind^{Q_q}_{B_q}}}
\newcommand{\IndBL}{\ensuremath{Ind^{L_q}_{(B \cap L)_q}}}


\newcommand{\ResQP}{\ensuremath{Res^{P_q}_{Q_q}}}

\newcommand{\Z}{\ensuremath{\mathcal{Z}}}

\newcommand{\ZHC}{\ensuremath{\mathcal{Z}^{HC}}}

\newcommand{\isoto}{\ensuremath{\overset{\sim}{\longrightarrow}}}

\newcommand{\C}{\ensuremath{{\mathbb{C}}}}
\newcommand{\D}{\ensuremath{\mathcal{D}}}
\newcommand{\Dq}{\ensuremath{\mathcal{D}_q}}

\newcommand{\B}{\ensuremath{\mathcal{B}}}
\newcommand{\Q}{\ensuremath{\mathcal{Q}}}
\newcommand{\Po}{\ensuremath{\mathcal{P}}}

\newcommand{\BGGcat}{\ensuremath{{\operatorname{O}}}}

\newcommand{\lf}{\ensuremath{{\lo\text{-}\hbox{\tiny{int}}}}}
\newcommand{\smallleft}{\ensuremath{{\hbox{\tiny{left}}}}}
\newcommand{\fin}{\ensuremath{{\hbox{\tiny{int}}}}}
\newcommand{\g}{\ensuremath{\mathfrak{g}}}
\newcommand{\bo}{\ensuremath{\mathfrak{b}}}

\newcommand{\n}{\ensuremath{\mathfrak{n}}}

\newcommand{\uo}{\ensuremath{\mathfrak{r}}}
\newcommand{\lo}{\ensuremath{\mathfrak{l}}}

\newcommand{\p}{\ensuremath{\mathfrak{p}}}
\newcommand{\qo}{\ensuremath{\mathfrak{q}}}
\newcommand{\h}{\ensuremath{\mathfrak{h}}}

\newcommand{\bb}{\ensuremath{\mathfrak{b}}}
\newcommand{\Loc}{\ensuremath{\mathcal{L}}}
\newcommand{\BGG}{\ensuremath{\mathcal{O}}}
\newcommand{\wMPq}{\ensuremath{\wt{M}_{P_q}}}

\newcommand{\MPql}{\ensuremath{{M}_{P_q,\lambda}}}

\newcommand{\wMBq}{\ensuremath{\wt{M}_{B_q}}}
\newcommand{\wMQq}{\ensuremath{\wt{M}_{Q_q}}}

\newcommand{\MBql}{\ensuremath{{M}_{B_q,\lambda}}}

\newcommand{\pQPlb}{\ensuremath{\pi^{\mathcal{Q}q}_{\mathcal{P}q\bullet}}}
\newcommand{\pQPub}{\ensuremath{\pi^{\mathcal{Q}q\bullet}_{\mathcal{P}q}}}
\newcommand{\pPBlb}{\ensuremath{\pi^{\mathcal{P}q}_{\mathcal{B}q\bullet}}}
\newcommand{\pPBub}{\ensuremath{\pi^{\mathcal{P}q\bullet}_{\mathcal{B}q}}}

\newcommand{\pGPls}{\ensuremath{\pi^{\mathcal{G}q}_{\mathcal{P}q*}}}

\newcommand{\pQPls}{\ensuremath{\pi^{\mathcal{Q}q}_{\mathcal{P}q*}}}
\newcommand{\pQPus}{\ensuremath{\pi^{\mathcal{Q}q*}_{\mathcal{P}q}}}

\newcommand{\UA}{\ensuremath{\U_\mathcal{A}}}

\newcommand{\UAres}{\ensuremath{\U^{\res}_\mathcal{A}}}

\newcommand{\A}{\ensuremath{\mathcal{A}}}
\newcommand{\OA}{\ensuremath{\mathcal{O}_\mathcal{A}}}

\newcommand{\W}{\ensuremath{\mathcal{W}}}

\newcommand{\U}{\ensuremath{\operatorname{U}}}
\newcommand{\Uq}{\ensuremath{\operatorname{U}_q}}
\newcommand{\Uql}{\ensuremath{\operatorname{U}^{\lambda}_q}}
\newcommand{\Ug}{\ensuremath{\operatorname{U(\g)}}}

\newcommand{\DtP}{\ensuremath{\widetilde{\mathcal{D}}_{\mathcal{P}_q}}}

\newcommand{\UqP}{\ensuremath{\operatorname{U}^{\lo\text{-}{\fin}}_q}}
\newcommand{\UqQ}{\ensuremath{\operatorname{U}^{\lo'\text{-}{\fin}}_q}}
\newcommand{\DqP}{\ensuremath{\Dq^{\lo\text{-}{\fin}}}}
\newcommand{\DqQ}{\ensuremath{\Dq^{\lo'\text{-}{\fin}}}}
\newcommand{\Uqf}{\ensuremath{\operatorname{U}^{{\fin}}_q}}
\newcommand{\Dqf}{\ensuremath{\Dq^{\text{}{\fin}}}}

\newcommand{\Uqres}{\ensuremath{\U^{\res}_q}}

\newcommand{\DtQ}{\ensuremath{\widetilde{\mathcal{D}}_{\mathcal{Q}_q}}}
\newcommand{\DtB}{\ensuremath{\widetilde{\mathcal{D}}_{\mathcal{B}_q}}}

\newcommand{\DPl}{\ensuremath{{\mathcal{D}}^{\lambda}_{\mathcal{P}_q}}}

\newcommand{\MPG}{\ensuremath{\MOD(\mathcal{O}_q, P_q)}}

\newcommand{\DPG}{\ensuremath{\MOD(\DqP,P_q,\Uq(\uo))}}

\newcommand{\DQG}{\ensuremath{\MOD(\DqQ,Q_q,\Uq(\uo'))}}
\newcommand{\DPGl}{\ensuremath{\MOD_\lambda(\DqP,P_q,\Uq(\uo))}}
\newcommand{\DPGwl}{\ensuremath{\MOD_{\widehat{\lambda}}(\DqP,P_q,\Uq(\uo))}}

\newcommand{\OPG}{\ensuremath{\MOD(\Oq,P_q)}}

\newcommand{\OQG}{\ensuremath{\MOD(\Oq,Q_q)}}
\newcommand{\OGG}{\ensuremath{\MOD(\Oq,G_q)}}

\newcommand{\wt}{\ensuremath{\widetilde}}

\newcommand{\alp}{\ensuremath{{\alpha}_{\lo}}}
\newcommand{\wtalp}{\ensuremath{\widetilde{\alpha}_{\lo}}}

\newcommand{\ot}{\ensuremath{\otimes}}

\newcommand{\Oq}{\ensuremath{\mathcal{O}_q}}

\begin{document}
\title{Singular localization for Quantum groups at generic $q$.}

\address{Erik Backelin, Departamento de Matem\'{a}ticas, Universidad de los Andes,
Carrera 1 N. 18A - 10, Bogot\'a, Colombia}
\email{erbackel@uniandes.edu.co}
\address{Kobi Kremnitzer, Mathematical Institute, University of Oxford, 24ï¿½29 St Giles'
Oxford OX1 3LB, UK}
 \email{kremnitzer@maths.ox.ac.uk}
  \subjclass[2000]{Primary 14-A22, 17B37, 53B32}
\date{}
\author{Erik Backelin and Kobi Kremnizer \\ {\bf Accepted for publication in Advances in Mathematics}}
\begin{abstract} We quantize parabolic flag manifolds and describe categories of quantum $\D$-modules on
them at a singular central character. We compute global sections
of generators for these categories for any $q \in \C^*$. For generic  $q$ we prove a singular version of
Beilinson-Bernstein localization for a quantized enveloping
algebra.
\end{abstract}
\maketitle

\section{Introduction} \subsection{} This note is part of our ongoing
project on localization and representation theory of quantum
groups. Localization theory starts with the celebrated
localization theorem of Beilinson and Bernstein, \cite{BB81},
which we remind goes as follows: Let $\g$ be a complex semi-simple
Lie algebra, $\h$ a Cartan subalgebra and $\B$ the flag manifold
of $\g$. Let $\lambda \in \h^*$ be regular and dominant and let
$I_\lambda$ be the corresponding maximal ideal in the center of
$\Ug$. Let $\D^\lambda_{\B}$ be the sheaf of $\lambda$-twisted
differential operators on $\B$. Then $\Gamma(\D^\lambda_{\B})
\cong \Ug^\lambda := \Ug/(I_\lambda)$ and $\Gamma:
\MOD(\D^\lambda_{\B}) \to \MOD(\Ug^\lambda)$ is an equivalence of
categories. For applications and details we recommend the book
\cite{HTT08}.

The next fundamental step was taken by Bezrukavnikov, Mirkovi\'c and
Rumynin, \cite{BMR}. They did Beilinson-Bernstein localization in
finite characteristic at regular central character and later in
\cite{BMR2} at singular central character, at the level of derived
categories, utilizing the techniques of Azumaya algebras.

The authors did localization for a quantum group $\Uq := \Uq(\g)$
at a generic $q \in \C^*$ in \cite{BK06} and at a root of unity in
\cite{BK08} - in both papers for regular central character, in the
latter motivated by the ideas of \cite{BMR}. In \cite{BK10} we
also did localization for the complex enveloping algebra case at a
singular central character.

\subsection{}
In this paper we do singular localization for $\Uq$ at a generic $q$. Let us
sketch the basic ideas:

We shall merely assume that $\g$ is reductive and
let $G$ be a reductive group such that $Lie \, G = \g$. Let $P
\subseteq G$ be a parabolic subgroup and let $\Po = G/P$ be the
corresponding parabolic flag manifold. First we quantize $\Po$ the
same way as we quantized $\B$ in \cite{BK06}. We remind that this
is done as follows: Observe that the category
$\MOD(\BGG_{\Po})$ of quasi-coherent sheaves on $\Po$ is
equivalent to the category $\MOD(\BGG(G),P)$ of
$P$-equivariant $\BGG(G)$-modules, since $G$ is affine. Since an
algebraic $P$-action is the same thing as an $\BGG(P)$-coaction
the latter category admits a quantization.

Indeed, let $\Oq := \Oq(G)$ and $\Oq(P)$ be the quantized Hopf
algebras of functions on $G$ and $P$, respectively, and let
$\MOD(\Oq,P_q)$ be the category whose objects are
$\Oq$-modules and $\Oq(P)$-comodules with a certain equivariance
compatibility, see Section \ref{Definition of the quantum flag
manifold}. According to Grothendieck a space is the same thing as
its category of sheaves, so we think of $\MOD(\Oq,P_q)\mod$ as
a quantization ``$\Po_q$" of $\Po$.  In Proposition \ref{aff} we give
a quantum counterpart to Serre's description of projective
varieties.

Then we fix a weight $\lambda$ and chose $P$ such that the
singular roots of $\lambda$ are contained in the $P$-parabolic
roots. \cite{BMR2} considered a sheaf $\mathcal{D}^{\lambda}_\Po$
of certain extended differentials operators in characteristic $p$
on $\Po$ that locally looks like $\D_{\Po}$ tensored with the
primitive quotient determined by $\lambda$ of the enveloping
algebra of the Levi-factor $\lo$ of $\p := Lie \, P$. In \cite{BK10} we
considered the same sheaf in characteristic $0$.

We quantize the category
$\MOD(\mathcal{D}^{\lambda}_\Po\text)$ in Section
\ref{Definition of quantum D-modules}. As in our earlier work we have to work with ad-finite (here called ad-integrable) versions of $\Uq$; a novelty is that we use a different ad-finite version $\UqP \subseteq \Uq$
for each parabolic.
  In Theorem \ref{thmtag0} we compute global
sections of the distinguished object  $\DqP$  that represents global sections (morally, this is the sheaf of quantum differential operators on $\Po$).  This result is valid for all $q$ (except perhaps
roots of unity of order smaller than the Coxeter number of $\g$).  The proof reduces to $\Po = \B$ in which case the result was established in \cite{BK06, BK08}; a short self-contained proof was given in \cite{BK11}.

Another result of independent interest, valid for all $q$, is the parabolic triangular decomposition, Proposition-Definition \ref{Ur prop def}, that we deduce from the Majid-Radford theorem. (For generic $q$ this was done in \cite{G11}.)

\smallskip

Our main result, which holds for generic $q$, is a version of Beilinson-Bernstein
localization, Theorem \ref{singlocthmtag0}. The proof is close to
that given in \cite{BK10}, which in turn is a variation of the
argument of \cite{BB81}.

T. Tanisaki \cite{T05,T12} has a different approach to quantum localization in the regular case. His approach follows on from the work of Lunts and Rosenberg \cite{LR97,LR99}. In this approach quantum flag varieties are defined as non-commutative projective varieties using graded algebras. Differential operators are defined using graded algebras as well.  In \cite{BK06} we show that the graded algebra approach (on the level of quasi-coherent sheaves) is equivalent to our equivariant approach. This result should also extend to D-modules. Hence the two approaches are essentially equivalent. The equivariant approach is more geometric since it views quantum spaces as quotient spaces. This allows for easy reductions of the problems into
questions in the representation theory of quantum groups.  Tanisaki proves a Beilinson-Bernstein localization theorem for quantum groups at regular central character for generic $q$ equivalent to our result in \cite{BK06}.

\subsection{}
In a forthcoming paper we will do singular localization
at a root of unity. Those results here which are valued for all $q$ will be needed in that paper. Just as in the modular case
(regular and singular) Beilinson-Bernstein
localization will then only hold at the level of derived
categories. This is the most interesting case and we shall use
this and the results of \cite{BMR, BMR2, BM} to compare the
representation theory of $\Uq$ with the representation theory of
the Lie algebra $\g(\overline{\mathbb{F}}_p)$, when $q^p=1$.

\medskip

\noindent
Concerning other applications, let us mention that  we in \cite{BK10}  used singular localization to describe translation functors, singular blocks in the
Bernstein-Gelfand-Gelfand category $\BGGcat$, Harish Chandra (bi-)modules and Whittaker modules in the enveloping algebra case. \textit{Exactly the same can be done for the generic quantum group, with practically exactly the same arguments.} We have omitted
to write this down.

\subsection{}
We advise the reader to read \cite{BK10} before this article.
That paper was written with this one in mind and the geometric
ideas behind the equivariant definitions given here are there thoroughly explained.
Quantum groups are technically harder to work with than
enveloping algebras in the context of localization theory because,
for instance, the adjoint action of $\Uq$ on itself is not
integrable (see Section \ref{loc fin section}) and PBW-bases and
(parabolic) triangular decompositions are more complicated.

However, once those technical complications are overcome the conceptual distance between the generic quantum group
and the complex enveloping algebra case is small.

\subsection{Acknowledgements} Part of this work was done during the first authors visit to the university of Aarhuus
in the spring of 2011 where excellent working conditions and great hospitality were provided. We would like to thank Henning H. Andersen for many useful conversation.

\section{Preliminaries on quantum groups} We work over $\C$, $\ot = \ot_\C$. $t$ is a parameter and
$q$ denotes a complex invertible number such that $q^2
\neq 1$. We say that $q$ is \emph{generic} if $q$ is not a root of
unity.

$\MOD(S)$ denotes the category of left modules over a ring $S$, $\MOD(S, \, additional \; data)$ is the category whose objects are left $S$-modules with some $additional \; data$.

In this section we recall some facts about quantum groups. The
material here is mostly standard. This paper is a continuation of
the papers \cite{BK06, BK08}. Let us mention that we shall not
particularly follow the notations of those papers, but rather
``quantize" those of \cite{BK10}.

\cite{CP} is our main reference for the material here. See also
Section \ref{appendix} for some facts about Hopf algebras that
will be used here.

\subsection{Root data}\label{Root data} Let $\g$ be a reductive Lie
algebra and let $\h \subseteq \bb \subseteq \g$ be a Cartan
subalgebra contained in a Borel subalgebra. Let $\n \subset \bb$
be the unipotent radical. Let $\overline{\bb}$ be the opposite
Borel and $\overline{\n}$ its unipotent radical. We denote by
$\Ug$ the enveloping algebra of $\g$ and by $\Z(\g)$ the center of
$\U(\g)$.

Let $\Delta$ be the simple roots, let $\Lambda$ be the lattice of
integral weights and let $\Lambda_r$ be the root lattice. Let
$\Lambda_+$ and $\Lambda_{r +}$ be the positive weights and the
positive integral linear combinations of the simple roots,
respectively.

Let $\mathcal{W}$ be the Weyl group of $\g$. We let $\langle
\,,\,\rangle$ denote a $\mathcal{W}$-invariant bilinear form on
$\h^\star$ normalized by $\langle \gamma,\gamma \rangle = 2$ for
each short root $\gamma$.

Let $T_\Lambda := \Hom_{groups}(\Lambda,\C^\star) = Maxspec \, \C
\Lambda$ be the character group of $\Lambda$, where $\C \Lambda$
is the group algebra of $\Lambda$. The $\W$-action on $\Lambda$
induces a $\W$-action on $T_\Lambda$. We define the
$\bullet$-action of $\W$ on $T_\Lambda$ by $w \bullet \lambda =
w(\lambda + \rho)-\rho$, where $\rho$ is the half sum of the
positive roots. For $\mu \in \Lambda$ we define $q^\mu \in
T_\Lambda$ by the formula $q^\mu(\gamma) =
q^{\langle\mu,\gamma\rangle}$, for $\gamma \in \Lambda$. For any
$\alpha \in \Delta$, put $d_\alpha := \langle \alpha , \alpha
\rangle/2$.

Let $G$ be a connected reductive algebraic group such that
$G/\Z(G)$ is simply connected (where $\Z(G)$ is the center of $G$)
and $Lie \, G = \g$. Let $B$ be the Borel subgroup of $G$ with
$Lie \, B = \bb$. Let $P \supseteq B$ be a parabolic subgroup of
$G$ and let $\p = Lie \, P$. Let $R$ be the unipotent radical and
let $L$ be the Levi-factor of $P$ and denote by $\uo$ and $\lo$
their respective Lie algebras. Let $\Delta_P \subseteq \Delta$ be
the $P$-parabolic roots (so $\Delta_B = \emptyset$, $\Delta_G =
\Delta$). Write $\overline{P}$, $\overline{R}$, $\overline{\p}$
and $\overline{\uo}$ for their respective opposite groups and Lie
algebras.

Let $\lambda \in T_\Lambda$ and put $\Delta_\lambda = \{\alpha \in
\Delta; s_\alpha \bullet \lambda = \lambda\}$. We say that
\begin{itemize}
\item $\lambda$ is $P$-{\it regular} if $\Delta_\lambda \subseteq
\Delta_P$. $\lambda$ is regular if it is $B$-regular, i.e. if
$Stab_{(\W, \bullet)}(\lambda) = \{e\}$. \item $\lambda$ is a
$P$-\emph{character} if $\lambda$ is integral and
$\lambda(K_\alpha) = 1$, for $\alpha \in {\Delta_P}$.
\end{itemize}

\subsection{Quantized enveloping algebras}\label{Quantized enveloping algebras}
\subsubsection{}  Let
$(a_{\alpha \beta})_{\alpha, \beta \in \Delta}$ be the Cartan
matrix of $\g$. Chose integers $d_{\alpha}$ so that $(d_\alpha
a_{\alpha \beta})$ is symmetric and define a new bilinear form
$\langle \ , \ \rangle_d$ on $\Lambda$ by $\langle \mu , \nu
\rangle_d = \sum d_\alpha f_\alpha \langle \alpha, \nu \rangle,$
for $\mu, \nu \in \Lambda$ and $\mu = \sum f_\alpha \alpha$,
$f_\alpha \in \mathbb{Q}$. Put $q_\alpha = q^{d_\alpha}$.

Let $\Uq := \Uq(\g)$ be the simply connected quantized enveloping
algebra of $\g$. Recall that $\Uq$ has $\C$-algebra generators
$E_\alpha, F_\alpha, K_\mu$, for $\alpha, \beta \in \Delta$ and
$\mu \in \Lambda$. These are subject to the relations
$$ K_{\lambda} K_{\mu} = K_{\lambda+\mu},\;\; K_0 = 1,$$
$$K_\mu E_\alpha K_{-\mu} = q^{\langle \mu, \alpha \rangle_d} E_\alpha,\;\; K_\mu F_\alpha K_{-\mu} = q^{-\langle \mu, \alpha\rangle_d}
F_\alpha,$$
$$[E_\alpha,\, F_\beta] = \delta_{\alpha, \beta} {{K_\alpha - K_{-\alpha}}\over{q_{\alpha}-q^{-1}_{\alpha}}}$$
and certain Serre-relations that we do not recall here. We have
$$
\triangle K_\mu = K_\mu \ot K_\mu, \  \triangle E_\alpha =
K_{\alpha} \ot E_\alpha + E_\alpha \ot 1, \ \triangle F_\alpha = 1
\ot F_\alpha + F_\alpha \ot K_{-\alpha}
$$
$$
S(K_\mu) = K_{-\mu}, \ S(E_\alpha) = -K_{-\alpha} E_\alpha , \
S(F_\alpha) = -F_\alpha K_{\alpha}
$$
$$
\epsilon(K_\mu) = 1, \epsilon(E_\alpha) = \epsilon(F_\alpha) = 0
$$

\subsubsection{}
Let $\Oq = \Oq(G)$ be the algebra of matrix coefficients of finite
dimensional type-1 representations of $\Uq$. This is a
quantization of the algebra of functions $\BGG(G)$ on $G$. There
is a natural pairing $\langle \;,\;\rangle: \Uq \otimes \Oq \to
\C$. This gives a $\Uq$-bimodule structure on $\Oq$ as follows
\begin{equation}\label{23}
\mu_l(u)(a) := ua :=  a_1 \langle u,a_2\rangle,\;\; \mu_r(u)(a) :=
au := \langle u,a_1\rangle a_2, \,\, u \in \Uq, a \in \Oq
\end{equation}
so that $\mu_l$ is a left action and $\mu_r$ is a right action.
Then $\Oq$ is the (restricted) dual of $\Uq$ with respect to this
pairing. Quantizing the enveloping algebras $\U(\p)$ and $\U(\lo)$
gives Hopf subalgebras of $\Uq$:
$$
\Uq(\p) = \C\langle K_\mu, E_\alpha, F_\beta; \mu \in \Lambda,
\alpha \in \Delta , \beta \in \Delta_P\rangle \hbox{ and }
$$
$$
\Uq(\lo) = \C\langle K_\mu, E_\alpha, F_\beta; \mu \in \Lambda,
\alpha, \beta \in \Delta_P\rangle
$$
In particular, $\Uq(\h)$ is isomorphic to the group algebra
$\C\Lambda$.

There is the counit $\epsilon: \Uq \to \C$. We put ${\Uq}_{>0} =
\Ker \epsilon$ and for any subalgebra $R$ of $\Uq$ we put $R_{>0}
= R \cap {\Uq}_{>0}$.

A quantization of $\Uq(\uo)$ will be given
in Section \ref{Quantizing U(uo)}.

\subsubsection{} We let $\Oq(P)$ and $\Oq(L)$ be the
quotient Hopf-algebras of $\Oq$ corresponding to the subalgebras
$\Uq(\p)$ and $\Uq(\lo)$ of $\Uq$, respectively, by means of the
duality between $\Oq$ and $\Uq$.

\subsection{Modules and comodules}\label{Modules and comodules}
\subsubsection{} We shall often call a right (resp., left) $\Oq
\text{-comodule}$ a left (resp., right) $G_q\text{-module}$. For a
(right) $G_q$-module $M$ we denote by $M^{G_q} = \{m \in M;
\triangle m = 1 \ot m\}$ the set of $G_q$-invariants. (Similarly,
there are $P_q$-,$L_q$-modules, etc.)

Let $Q \supseteq P$ be a parabolic subgroup of $G$. $\Oq(Q)$ is
naturally a $Q_q-P_q$-bimodule. Using the antipode we can make a
right $P_q$-module into a left $P_q$-module. Because of this we
shall freely pass between  $Q_q-P_q$-bimodules and vector spaces
equipped with commuting left $Q_q$ and $P_q$-module structures.

We have an
adjoint pair of functors
\begin{equation}\label{res-ind def}
Res^{P_q}_{Q_q}: \MOD(Q_q) \rightleftarrows \MOD(P_q) :Ind^{Q_q}_{P_q}
\end{equation}
where $Res^{P_q}_{Q_q}(M) = M$ as a set and the $P_q$-module
structure is the restriction of the of $Q_q$-module structure,
i.e. the $\Oq(P)$-comodule structure is the composition $M
\overset{\triangle}{\to} M \ot \Oq(Q) \to \ M \ot \Oq(P)$, for $M
\in \MOD(Q_q)$. $Ind^{Q_q}_{P_q}(N) = (\Oq(Q) \ot N)^{P_q}$, for $N
\in \MOD(P_q)$ and the $P_q$-invariants are taken with respect to
the diagonal $P_q$-action. The $Q_q$-action on
$Ind^{Q_q}_{P_q}(N)$ is given by the left $\Q_q$-action on
$\Oq(Q)$.

\subsubsection{} For $M \in \MOD(Q_q)$ and $N \in \MOD(P_q)$ there is the \emph{tensor
identity}
\begin{equation}\label{tensor identity}
M \ot Ind^{Q_q}_{P_q} N \overset{\sim}{\to} Ind^{Q_q}_{P_q} (Res^{P_q}_{Q_q}(M) \ot
N), \ m \ot (a \ot n) \mapsto m_1a \ot m_2 \ot n
\end{equation}
which is an isomorphism of $Q_q$-modules.
\medskip

\subsubsection{} Suppose that $\lambda \in T_\Lambda$. We observe that
there exists an irreducible left $P_q$-module $V_{P_q}(\lambda)$
with highest weight $\lambda$ iff $\lambda(K_\alpha) \in
\{1,q,q^2, \ldots \}$, for $\alpha \in \Delta_P$, when $q$ is
generic (at a root of unity there is a similar condition). Note
that $V_{L_q}(\lambda) := V_{P_q}(\lambda)$ is an irreducible
representation for $L_q$. Of course, $\dim V_{P_q}(\lambda) = 1
\iff \lambda$ is a $P_q$-character.

\subsubsection{} Let $P \subseteq Q$ be parabolic subgroups of $G$ and let $L'$
be the Levi factor of $Q$. We state for the record
\begin{lemma}\label{induction L verses P} For
any $P_q$-module $M$, $Ind^{L'_q}_{(L' \cap P)_q} M$ carries a
natural structure of $Q_q$-module. There is a natural isomorphism
of $Q_q$-modules
\begin{equation}\label{general equality}
\tau: \IndPQ(M) \overset{\sim}{\to} Ind^{L'_q}_{(L' \cap P)_q}(M).
\end{equation}
\end{lemma}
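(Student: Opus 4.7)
The plan is to exploit the Levi decomposition $Q = L' \ltimes R'$, where $R'$ is the unipotent radical of $Q$. Classically, the quotient $Q \twoheadrightarrow L'$ (with kernel $R'$) and the Levi inclusion $L' \hookrightarrow Q$ are mutually split group homomorphisms, and the corresponding parabolic decomposition $P = (L' \cap P) \ltimes R'$ (using $R' \subseteq P$) yields compatible Hopf algebra maps. In particular, one has a Hopf surjection $\pi^\ast: \Oq(Q) \twoheadrightarrow \Oq(L')$ split by $\iota^\ast: \Oq(L') \hookrightarrow \Oq(Q)$ (so $\pi^\ast \circ \iota^\ast = \mathrm{id}$), and analogously $\Oq(P) \twoheadrightarrow \Oq(L' \cap P)$ compatibly with $\pi^\ast$.

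I would define $\tau$ as the map induced by $\pi^\ast \otimes \mathrm{id}_M$ on invariants. Well-definedness follows from the compatibility of $\pi^\ast$ with $\Oq(P) \twoheadrightarrow \Oq(L' \cap P)$: this intertwines the restriction of the $P_q$-coaction to $(L' \cap P)_q$ with the natural $(L'\cap P)_q$-coaction on $\Oq(L')$, so $P_q$-invariants are sent to $(L' \cap P)_q$-invariants. For bijectivity, I would use the quantum Levi tensor decomposition $\Oq(Q) \cong \Oq(L') \otimes \Oq(R')$ together with the parabolic decomposition $\Oq(P) \cong \Oq(L' \cap P) \otimes \Oq(R')$ to rewrite $\IndPQ(M) = (\Oq(L') \otimes \Oq(R') \otimes M)^{P_q}$. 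Splitting $P_q = (L' \cap P)_q \ltimes R'_q$, the $R'_q$-invariants collapse $\Oq(R') \otimes M$ to $M$ via the fundamental theorem of Hopf modules $(\Oq(R') \otimes N)^{R'_q} \cong N$, and the remaining $(L' \cap P)_q$-invariants yield the target. A chase shows this composition agrees with $\tau$. Transporting the canonical $Q_q$-structure on $\IndPQ(M)$ along $\tau^{-1}$ then defines the $Q_q$-structure on $\Ind^{L'_q}_{(L' \cap P)_q}(M)$ claimed in the first part of the lemma, and makes $\tau$ a $Q_q$-isomorphism. (Equivalently, one recovers the explicit twisted action in which $L'_q$ acts by left translation on $\Oq(L')$ and the complementary $R'_q$-part acts via the $R'_q$-coaction on $M$, the quantum analog of the classical $(q\cdot g)(\ell) = (r')^{-1} g(\ell')$ for $q^{-1}\ell = \ell' r'$.)

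The main technical obstacle is making the quantum Levi and parabolic tensor decompositions precise with the requisite equivariance, since the paper defers the construction of the quantum unipotent radicals $\Uq(\uo)$ (and hence $\Oq(R')$) to a later section. A self-contained alternative is to build $\tau^{-1}$ directly using the splitting $\iota^\ast$: send $g \in (\Oq(L') \otimes M)^{(L'\cap P)_q}$ to an appropriate ``twist'' of $(\iota^\ast \otimes \mathrm{id}_M)(g)$ by the $R'_q$-coaction on $M$, verify $P_q$-invariance using $R' \subseteq P$ together with the $(L'\cap P)_q$-invariance of $g$, and check $\tau \circ \tau^{-1} = \mathrm{id}$ via $\pi^\ast \circ \iota^\ast = \mathrm{id}$.
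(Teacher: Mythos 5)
Your proposal matches the paper's proof: the paper defines $\tau(f\ot m)=p(f)\ot m$ where $p:\Oq(Q)\to\Oq(L')$ is the surjection dual to $\Uq(\lo')\hookrightarrow\Uq(\qo)$ (your $\pi^\ast$), declares bijectivity ``straightforward to verify,'' and obtains the $Q_q$-structure on the target by transport of structure along $\tau$, exactly as you do. Your elaboration of the bijectivity check (via the splitting $\iota^\ast$ and the Hopf-module argument collapsing the $R'_q$-direction) simply fills in the details the paper omits.
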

\begin{proof}
Let $\Oq(Q) \overset{p}{\to} \Oq(L')$ be the algebra homomorphism
which is dual to the inclusion $\Uq(Lie \, L') \hookrightarrow
\Uq(Lie \, Q)$. We then define $\tau(f \ot m) = p(f) \ot m$, for
$f \ot m \in \IndPQ(M)$. It is straightforward to verify that this
is an isomorphism. The $Q_q$-action on the right hand side is now
defined by transportation of structure.
\end{proof}

\subsection{The center of $\Uq$ and the Harish-Chandra
homomorphism}\label{The center of Uq and the Harish-Chandra
homomorphism} \subsubsection{} Let $\Z(A)$ denote the center of an
algebra $A$. Put $\Z = \Z(\Uq)$. Then $\Z$ contains the
Harish-Chandra center $\Z^{HC}$ and, if $q$ is a primitive $l$'th
root of unity, $\Z$ also contains the $l$-center $\Z^{(l)}$, which
is generated by $E^l_{\alpha}, F^l_\alpha$ and $K^l_{\mu}$,
$\alpha \in \Delta, \mu \in \Lambda$.

Let us now describe $\Z^{HC}$. Let $\Gamma$ be the group of all
group homomorphisms from $\Lambda$ to $\{ \pm \}$. Thus $\Gamma =
\langle \sigma_\alpha; \alpha \in \Delta \rangle$, where
$\sigma_\alpha(\omega_\beta) = (-1)^{\delta_{\alpha, \beta}}$ and
the $\omega_\beta$'s are the fundamental weights, $\beta \in
\Delta$. $\Gamma$ has a natural action of $\W$, so we can form
$\tW := \Gamma \rtimes \W$. We consider the following action of
$\tW$ on $\C\Lambda$: the subgroup $\W$ act by the
$\bullet$-action and $\sigma \in \Gamma$ act by $\sigma(K_\lambda)
= \sigma(\lambda)K_{\lambda}$, for $\lambda \in \Lambda$. Let
$\C\Lambda^{\tW}$ be the invariant ring. Observe that
$\C\Lambda^{\Gamma} = \C2\Lambda$ so that $\C\Lambda^{\tW} =
\C2\Lambda^\W$. There is the Harish-Chandra isomorphism
$$
\chi: \Z^{HC} \isoto \C\Lambda^{\tW}
$$
For $\lambda \in T_\Lambda$ let $\chi_\lambda: \Z^{HC} \to \C$ be
the corresponding central character. This construction is standard
when $\g$ is semi-simple. Our reductive $\g$ can be written as a
direct sum of Lie algebras: $\g = [\g, \g] \oplus \Z_\g$, where
$\Z_\g$ is the center of $\g$ and $[\g,\g]$ is semi-simple. Thus,
$\Z_\g \subset \h$ and we have $\Uq = \Uq([\g,\g]) \ot \Uq(\Z_\g)$
(where $\Uq(\Z_\g) \subset \C\Lambda$). The Harish-Chandra
homomorphism $\chi$ for $\Uq$ can thus be described as the product
$\chi = \chi_{[\g,\g]} \ot Id_{\Uq(\Z_\g)}$, where
$\chi_{[\g,\g]}$ is the Harish-Chandra isomorphism for the quantum
group $\Uq([\g,\g])$.

If $q$ is an $l$'th root of unity we have $\Z = \Z^{HC}
\ot_{\Z^{(l)} \cap \Z^{HC}} \Z^{(l)}$ and if $q$ is not a root of
unity we have $\Z = \Z^{HC}$.

Note that to describe $\Z^{HC}(\Uq(\lo))$ we should consider
$\tW_P = \Gamma_P \rtimes \W_P$ where $\Gamma_P = \langle
\sigma_\alpha; \alpha \in \Delta_P \rangle$. We get then the
Harish-Chandra isomorphism
$$
\chi_{\lo}: \Z^{HC}(\Uq(\lo)) \isoto \C\Lambda^{\tW_P}.
$$

\subsubsection{}
Part \textbf{i)} of the following lemma is standard and part
\textbf{ii)} is proved in \cite{BK10} for the enveloping algebra
case and the proof in the generic quantum case is the same.
\begin{lemma}\label{Pregularweightlemma}  Assume that $q$ is generic. Let
$\lambda \in T_\Lambda$. \textbf{i)}  Then $\lambda$ is dominant
iff for all $\mu \in \Lambda_{r +} \setminus \{0\}$ we have
$\chi_{\lambda + \mu} \neq \chi_\lambda$,

\smallskip

\noindent \textbf{ii)} Let $\lambda$ be $P$-regular and dominant.
Let $\mu$ be a $P$-character. Then for any $\psi \in
\Lambda(V_{G_q}(\mu)), \psi \neq \mu$, we have $\chi_{\lambda +
\mu} \neq \chi_{\lambda +\psi}$.
\end{lemma}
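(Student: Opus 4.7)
The approach is to use the Harish-Chandra isomorphism $\chi: \Z^{HC} \isoto \C\Lambda^{\tW}$ to convert equality of central characters into equality of $\tW$-orbits on $T_\Lambda$. Since $q$ is generic, the $\Gamma$-component of $\tW$ cannot map an element of the integral coset $\lambda + \Lambda$ into itself nontrivially, so whenever two candidates differ by an element of $\Lambda_r$ the orbit condition reduces to a bare $\W\bullet$-orbit condition.

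For part \textbf{i)}, dominance of $\lambda$ gives $\lambda - w \bullet \lambda \in \Lambda_{r+}$ for every $w \in \W$, so an equality $\lambda + \mu = w \bullet \lambda$ with $\mu \in \Lambda_{r+}$ forces $\mu = 0$. The converse is the standard reflection argument: if $\lambda$ fails to be dominant, then some $s_\alpha \bullet \lambda - \lambda$ is a strictly positive multiple of a simple root, supplying the desired $\mu$.

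For part \textbf{ii)}, assume for contradiction that $\chi_{\lambda + \mu} = \chi_{\lambda + \psi}$; since $\mu - \psi \in \Lambda_r$, the reduction yields $w \in \W$ with $w \bullet (\lambda + \mu) = \lambda + \psi$, equivalently $w(\lambda + \mu + \rho) = \lambda + \psi + \rho$. The key observation is that $\lambda + \mu + \rho$ lies in the closed positive Weyl chamber: for $\alpha \in \Delta_P$, $\langle \lambda + \mu + \rho, \alpha^\vee \rangle = \langle \lambda + \rho, \alpha^\vee \rangle \geq 0$ since $\mu$ is orthogonal to $\Delta_P$ (being a $P$-character) and $\lambda$ is dominant, while for $\alpha \in \Delta \setminus \Delta_P$, $\langle \lambda + \mu + \rho, \alpha^\vee \rangle = \langle \lambda + \rho, \alpha^\vee \rangle + \langle \mu, \alpha^\vee \rangle > 0$ by $P$-regularity of $\lambda$ and dominance of $\mu$; in particular $\lambda + \rho$ itself is dominant.

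Setting $v = w^{-1}$, one has $v(\lambda + \psi + \rho) = \lambda + \mu + \rho$, whence $v\psi - \mu = (\lambda + \rho) - v(\lambda + \rho)$. The right side lies in $\Lambda_{r+}$ by dominance of $\lambda + \rho$, so $v\psi \geq \mu$ in the dominance order; on the other hand $v\psi$ is still a weight of $V_{G_q}(\mu)$, so $v\psi \leq \mu$. Hence $v\psi = \mu$ and $v(\lambda + \rho) = \lambda + \rho$, so $v$ lies in the $\bullet$-stabilizer of $\lambda$, which by $P$-regularity sits inside $\W_P$. Since $\mu$ is $\W_P$-invariant, $v\psi = \mu$ collapses to $\psi = \mu$, a contradiction. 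The main technical subtlety is the $\tW$-to-$\W$ reduction at generic $q$; once that is in place, the argument is the classical chamber-plus-weight analysis of \cite{BK10} carried over verbatim.
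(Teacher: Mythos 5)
Your argument is correct and is exactly the route the paper intends: the paper gives no proof of this lemma, declaring part \textbf{i)} standard and deferring part \textbf{ii)} to the enveloping-algebra case in \cite{BK10}, whose chamber-plus-weight analysis ($\lambda+\mu+\rho$ lies in the closed dominant chamber since $\mu\perp\Delta_P$; $v\psi-\mu\geq 0$ from the chamber inequality versus $v\psi\leq\mu$ from $v\psi\in\Lambda(V_{G_q}(\mu))$; hence $v\in Stab_\bullet(\lambda)\subseteq \W_P$ and $\psi=\mu$) is precisely what you carry out. The only genuinely quantum input is the $\tW$-to-$\W$ reduction you flag, which is most cleanly justified by noting $\C\Lambda^{\tW}=\C[2\Lambda]^{\W}$ and running the same inequalities on $2\Lambda$; with that remark your write-up matches, and usefully fleshes out, the proof the paper points to.
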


\subsection{Integral versions of $\Uq$}\label{Integral versions of Uq} \subsubsection{} Let $t$ be a parameter and let $\U_t$
be the $\C(t)$-algebra defined by the same generators as $\Uq$ and
modulo the relations obtained by substituting $t$ for $q$ in the
defining relations of $\Uq$. Let $\A = \mathbb{C}[t,t^{-1}]$. Let
$\UAres$ be Lusztig's integral form of $\Uq$, the $\A$-algebra in
$\U_t$ generated by divided powers $E^{(n)}_\alpha =
E^n_\alpha/{[n]_{d_\alpha}!}$, $F^{(n)}_\alpha =
F^n_\alpha/{[n]_{d_\alpha}!}$, $\alpha$ a simple root, $n \geq 1$
(where $[m]_d = {\prod^m_{s=1} {q^{d\cdot s} - q^{-d\cdot
s}}\over{q^{d} -  q^{-d}}}$) and the $K_\mu$'s, $\mu \in P$. There
is also the de Concini-Kac integral form $\UA$, which is generated
over $\A$ by the $E_\alpha, F_\alpha$ and $K_\mu$'s. The
subalgebra $\UA$ is preserved by the left (and right) adjoint
actions of $\UAres$. The braid operators $T_w$ preserve these
integral versions.

$\OA$ is defined to be the dual of $\UAres$. This is a Hopf
$\A$-subalgebra of $\Oq$.

Similarly we get integral versions $\UA(\p)$ and $\UAres(\p)$ that
are subalgebras of  $\UA$ and $\UAres$, respectively.

Specializing $t \mapsto q$ we get $\Uq(\p)$ and $\Uq^{\res}(\p)$
as well. For a generic $q$ we have $\Uq^{\res}(\p) = \Uq(\p)$.

\subsection{Integrable part of $\Uq$}\label{loc fin section}

\subsubsection{Integrability of modules} A (say right)
$\Uq^{\res}(\lo)$-module $M$ is called integrable if there is an
$L_q$-module structure on it such that $um = \langle u, m_1\rangle
m_2$, for $u \in \Uqres(\lo), m \in M$, $m_1 \ot m_2$ is the coaction
on $m$ and $\langle \ , \ \rangle: \Uqres(\lo) \ot \Oq(L) \to \C$ is the
natural pairing. For $q$ generic $M$ is integrable
iff the $\Uqres(\lo)$-action is locally finite and the $K_\mu$'s act by
integer eigenvalues. If $q$ is a root of unity and $M$ admits an $\A$-form $M_\A$ we have that $M$ is
integrable if the $\UAres(\lo)$-action on $M_\A$ is locally finite and
the $K_\mu$'s act by integer eigenvalues.

Any (right) $\Uq^{\res}(\lo)$-module $M$ has a unique maximal submodule
$M^\lf$ on which the $\Uq^{\res}(\lo)$ action is
integrable.
(On the other hand, an  $L_q$-module structure
always differentiates to a $\Uq(\lo)$-module structure.)

We write $\Uq^{\fin} := \Uq^{\g\text{-}\fin}$ and  $M^{\fin} := M^{\g\text{-}\fin}$, if $M$ is a ${\Uqres}$-module.

\subsubsection{}
$\UqP$ is a subalgebra and a left
coideal in $\Uq$, i.e. $\triangle \UqP \subset \UqP \ot \Uq$ (but
it is not a Hopf algebra).

$\Uqf$ was first systematically studied in \cite{JL92}. (They
called it the ``ad-finite" subalgebra, but since this is
misleading at a root of unity we prefer the name ``ad-integrable".
In \cite{BK06, BK08} we also called it $\Uq^{fin}$
instead of $\Uqf$.) Let $\omega_i$ and $\alpha_i$, $1 \leq i \leq
r := \rank \g$, be the fundamental weights and the simple roots,
respectively. To give the reader a feeling for $\Uqf$ we state:
\begin{lemma}\label{Its a big subalgebra!} $i)$ $K_{2\omega_i}, K_{2\omega_i - \alpha_i}E_{\alpha_i}$ and
$K_{2\omega_i}F_{\alpha_i} \in
\Uqf$. $ii)$ $\Uqf \ot_{\Uqf \cap \C\Lambda} \C\Lambda = \Uq$.
\end{lemma}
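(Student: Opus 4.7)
\emph{Plan.}

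For part (i), my approach would combine the classical Joseph--Letzter description of $\Uqf$ with a short direct computation. First I would show that $K_{2\omega_i} \in \Uqf$. For this, the key input (due to Joseph--Letzter \cite{JL92}) is that the right adjoint orbit of $K_{-2\lambda}$ for $\lambda$ dominant is isomorphic to the simple $G_q$-module $V(2\lambda)$, so in particular it is finite-dimensional and integrable; the same then follows for $K_{+2\lambda}$ by dualizing or by the same argument applied to the opposite Borel. (Alternatively, one may verify this by a direct induction: applying $(\text{ad}_r E_\alpha)^n$ to $K_{2\omega_i}$ using $\Delta(E_\alpha) = K_\alpha \otimes E_\alpha + E_\alpha \otimes 1$, $S(E_\alpha) = -K_{-\alpha}E_\alpha$ and the $q$-commutation $E_\alpha K_\mu = q^{-\langle \mu, \alpha\rangle_d}K_\mu E_\alpha$ yields a multiple of $K_{2\omega_i - n\alpha}E_\alpha^n$; the coefficient is a product of factors $(1 - q^{-\langle 2\omega_i - k\alpha, \alpha\rangle_d})$ which vanishes as soon as $k$ reaches the appropriate highest-weight bound, reproducing the $V(2\omega_i)$-module structure.)

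With $K_{2\omega_i} \in \Uqf$ in hand, the other two elements are immediate: $\Uqf$ is stable under $\text{ad}_r$, and a direct one-step calculation gives
\[
(\text{ad}_r E_{\alpha_i})(K_{2\omega_i}) = (1 - q^{-\langle 2\omega_i,\alpha_i\rangle_d})\, K_{2\omega_i - \alpha_i} E_{\alpha_i},
\quad
(\text{ad}_r F_{\alpha_i})(K_{2\omega_i}) = (1 - q^{\langle 2\omega_i, \alpha_i\rangle_d})\, K_{2\omega_i} F_{\alpha_i}.
\]
Since $q^2 \neq 1$ and the exponent $\langle 2\omega_i, \alpha_i\rangle_d$ is a nonzero integer multiple of $d_{\alpha_i}$, both scalars are nonzero, so $K_{2\omega_i - \alpha_i}E_{\alpha_i}$ and $K_{2\omega_i}F_{\alpha_i}$ belong to $\Uqf$.

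For part (ii), surjectivity of the multiplication map $\Uqf \otimes_{\Uqf \cap \C\Lambda} \C\Lambda \to \Uq$ is a direct consequence of (i). Indeed $K_{2\omega_i}^{\pm 1} \in \Uqf \cap \C\Lambda$, and the identities
\[
E_{\alpha_i} = \bigl(K_{2\omega_i - \alpha_i} E_{\alpha_i}\bigr)\cdot K_{\alpha_i - 2\omega_i}, \qquad F_{\alpha_i} = K_{-2\omega_i}\cdot \bigl(K_{2\omega_i} F_{\alpha_i}\bigr),
\]
together with $K_\mu \in \C\Lambda$, show that every algebra generator of $\Uq$ lies in $\Uqf \cdot \C\Lambda$. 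For injectivity I would use that $\Uqf \cap \C\Lambda = \C 2\Lambda$ (the sublattice generated by the $K_{2\omega_i}^{\pm 1}$), over which $\C\Lambda$ is free of finite rank $|\Lambda/2\Lambda|$. Picking coset representatives $\{K_\nu\}$ for $\Lambda/2\Lambda$, the map becomes $\bigoplus_\nu \Uqf \cdot K_\nu \to \Uq$, and its injectivity follows by comparing the weight decompositions on both sides: each weight space of $\Uq$ (for left conjugation by $\C\Lambda$) is freely generated over $\C 2\Lambda$ by the corresponding weight space of $\Uqf$ on the left and by the chosen coset representatives on the right, an observation that can be extracted from the PBW decomposition combined with Joseph--Letzter's multiplicity-free decomposition $\Uqf = \bigoplus_{\lambda \in \Lambda_+} (\text{ad}_r\Uq)(K_{-2\lambda})$.

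The main obstacle is establishing that the $\text{ad}_r\Uqres$-orbit of $K_{2\omega_i}$ is finite-dimensional; this is the essential content and is due to Joseph--Letzter. Once it is in place, everything else reduces to Hopf-algebraic bookkeeping with the explicit coproduct and antipode formulas.
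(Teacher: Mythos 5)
Part $(i)$ of your argument is essentially the paper's. The authors likewise compute the one-step actions $ad_r(E_{\alpha_j})(K_{2\omega_i})$ and $ad_r(F_{\alpha_j})(K_{2\omega_i})$ (zero for $j\neq i$; equal up to a nonzero scalar to $K_{2\omega_i-\alpha_i}E_{\alpha_i}$ and $K_{2\omega_i}F_{\alpha_i}$ for $j=i$), note that the second applications $ad^2_r(E_{\alpha_i})$ and $ad^2_r(F_{\alpha_i})$ kill $K_{2\omega_i}$, and conclude $K_{2\omega_i}\in\Uqf$ by a PBW argument; your alternative appeal to Joseph--Letzter is an acceptable substitute for that last step, modulo two imprecisions: with the paper's \emph{right} adjoint action it is $K_{+2\lambda}$, $\lambda$ dominant, whose orbit is finite dimensional, and that orbit is isomorphic to $\End(V(\lambda))$ rather than to the simple module $V(2\lambda)$ (harmless here, since only integrability is used).

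In part $(ii)$ there is a genuine error. You assert $K_{2\omega_i}^{-1}\in\Uqf$ and $\Uqf\cap\C\Lambda=\C 2\Lambda$. This is false: the same one-step computation gives $ad_r(E_{\alpha_i})^n(K_{-2\omega_i})=\prod_{k=0}^{n-1}\bigl(1-q^{\langle 2\omega_i+k\alpha_i,\alpha_i\rangle_d}\bigr)K_{-2\omega_i-n\alpha_i}E_{\alpha_i}^n$, whose coefficients never vanish for generic $q$, so the orbit of $K_{-2\omega_i}$ is infinite dimensional; this is consistent with the paper's own $\mathfrak{sl}_2$ example, where the Cartan part of $\Uqf$ is $\C[K]$ and not $\C[K,K^{-1}]$. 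By separation of variables the intersection is the \emph{monoid} algebra $\bigoplus_{\lambda\in\Lambda_+}\C K_{2\lambda}$ on the dominant cone. Your surjectivity argument survives, since it only needs $K_{\alpha_i-2\omega_i}$ and $K_{-2\omega_i}$ to lie in $\C\Lambda$ and the two elements of $(i)$ to lie in $\Uqf$, together with the observation that $\Uqf$ is spanned by $ad_r(\C\Lambda)$-weight vectors so that $\C\Lambda\cdot\Uqf=\Uqf\cdot\C\Lambda$ is a subalgebra; this yields $\Uqf\cdot\C\Lambda=\Uq$, which is all the paper extracts from $(i)$ (its proof of $(ii)$ is the single phrase ``follows from $i)$''). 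But your injectivity argument, resting on $\C\Lambda$ being finite free of rank $|\Lambda/2\Lambda|$ over $\Uqf\cap\C\Lambda$, does not go through as written: $\C\Lambda$ is obtained from the monoid algebra by first inverting the $K_{2\omega_i}$ and only then adjoining coset representatives, so the freeness and weight-space comparison must be reorganized around that localization.
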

\begin{proof} A computation (inside $\UA$) shows that
$$ad^2_r(E_{\alpha_i})(K_{2\omega_i}) =
ad_r(E_{\alpha_j})(K_{2\omega_i})=
ad^2_r(F_{\alpha_i})(K_{2\omega_i}) =
ad_r(F_{\alpha_j})(K_{2\omega_i}) = 0, \; i \neq j.
$$
An application of the PBW-theorem shows that this implies that
$K_{2\omega_i} \in \Uqf$, for all $i$. Since $K_{2\omega_i -
\alpha_i}E_{\alpha_i} \thicksim ad_r(E_{\alpha_i})(K_{2\omega_i})$
and $K_{2\omega_i}F_{\alpha_i} \thicksim
ad_r({F_{\alpha_i}})(K_{2\omega_i})$ (where $\thicksim$ means
equal up to a non-zero scalar) we have proved $i)$. $ii)$ follows
from $i)$.
\end{proof}

We have $\ZHC(\Uq) \subseteq \Uqf$. Let $\ZHC(\Uqf) := \ZHC(\Uq)$
be the Harish-Chandra center of $\Uqf$. On the other hand
$\Z^{(l)}(\Uq) \nsubseteq \Uqf$, for $q$ an $l$'th root of unity.

\begin{example} Let $\Uq(\mathfrak{sl}_2) = \C \langle E,F,K,K^{-1}
\rangle$, where $KEK^{-1} = qK, KFK^{-1} = q^{-1}F$ and $[E,F] =
\frac{K^2-K^{-2}}{q-q^{-1}}$. Then $\Uq(\mathfrak{sl}_2)^{\fin} =
\C \langle K^2 F, K^2, E , z\rangle$, where $z =
\frac{qK^2+q^{-1}K^{-2}}{(q-q^{-1})^2} + F E \in
\Z(\Uq(\mathfrak{sl}_2))$ is the Casimir operator. Thus $\Uqf \cap
\C\Lambda = \C[K^2]$ and $\C \Lambda = \C[K,K^{-1}]$.
\end{example}

Another important feature is that, contrary to $\Uq$, $\Uqf$ is
free over its Harish-Chandra center, except possibly for a finite
set of roots of unity, see \cite{JL92, B00, BK11}. This freeness
property holds only for the simply connected version of $\Uq$
which is a main reason we work with that version.

We are primary interested in the representation theory of $\Uq$,
but it will be $\Uqf$ that occurs naturally as global sections,
see Theorem \ref{thmtag0}. However, we remark that the
representation theories of $\Uq$ and $\Uqf$ are very similar. This
will be precisely explained in the next section.

Following \cite{JL94} we define
\begin{definition}\label{Uql-def} Let $\lambda \in T_\Lambda$. Put $\Uql := \Uqf/(\Ker \chi_\lambda)$.
\end{definition}
The right $G_q$-action on $\Uqf$, resp. on $\Uql$, that is
obtained by integrating $ad_r(\Uq)$ is called the right adjoint
$G_q$-action and again denoted by $ad_r$.

\begin{rem} There was a misprint in the paper \cite{BK06} which unfortunately, partly, moved on to \cite{BK08}; there we
defined $\Uql$ to be $\Uqf/Ann_{\Uqf}(M_\lambda)$. At a generic
$q$ this is the same as the correct definition \ref{Uql-def} given
here but at a root of unity it is wrong.
\end{rem}
\subsection{Verma modules and universal Verma
modules}\label{Verma modules and universal Verma modules}
\subsubsection{} There is the Verma module $M_\lambda := \Uq \ot_{\Uq(\bo)}
\C_\lambda$ for $\Uq$ with highest weight $\lambda \in T_\Lambda$,
where $\C_\lambda$ is the 1-dimensional representation of
$\Uq(\bo)$ defined by $\lambda$. For $q$ generic we have the
\emph{quantum Duflo formula} (see \cite{JL94} and \cite{BK11})
$$
Ann_{\Uq^{\fin}}(M_\lambda) = \Uq^{\fin} \cdot \Ker \chi_\lambda.
$$
Let $\lambda \in T_\Lambda$ and let $\mu_\lambda$ be the highest
weight vector of $M_\lambda$. Denote by $M_\lambda |_{\Uqf}$ the
Verma module $M_\lambda$ considered as a module for the subalgebra
$\Uqf$ of $\Uq$. Restriction defines an algebra map $\phi:
T_\Lambda \to T_{\Lambda^{\fin}} := Maxspec \, (\C \Lambda \cap
\Uqf)$. We get
\begin{lemma}\label{Verma module theory} $\Uqf \cdot \mu_\lambda = M_\lambda$. Moreover, if
if $\lambda, \lambda' \in T_{\Lambda}$ satisfy $\phi(\lambda) =
\phi(\lambda')$ then $M_\lambda |_{\Uqf} \cong M_{\lambda'}
|_{\Uqf}$. Here $M_\lambda |_{\Uqf}$ denotes $M_\lambda$
considered as a module over $\Uqf$.
\end{lemma}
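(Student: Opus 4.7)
Write $A := \Uqf \cap \C\Lambda$, so that by definition $\phi(\lambda) = \phi(\lambda')$ is equivalent to $\lambda|_A = \lambda'|_A$. The first assertion follows directly from Lemma~\ref{Its a big subalgebra!}~(ii), which identifies $\Uq$ with $\Uqf \otimes_{A} \C\Lambda$: any $u \in \Uq$ can be written $u = \sum_i u_i K_{\mu_i}$ with $u_i \in \Uqf$ and $K_{\mu_i} \in \C\Lambda$, and since $\mu_\lambda$ is a $\C\Lambda$-eigenvector, $u\mu_\lambda = \sum_i \lambda(K_{\mu_i})\,u_i\mu_\lambda \in \Uqf \cdot \mu_\lambda$. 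Combined with $\Uq\mu_\lambda = M_\lambda$ this yields $\Uqf \cdot \mu_\lambda = M_\lambda$.

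For the second assertion I propose to construct the isomorphism directly by setting $\Phi \colon M_\lambda|_{\Uqf} \to M_{\lambda'}|_{\Uqf}$, $\Phi(u\mu_\lambda) := u\mu_{\lambda'}$ for $u \in \Uqf$. The first assertion makes $\Phi$ surjective, and by symmetry of the hypothesis $\phi(\lambda) = \phi(\lambda')$ a well-defined $\Phi$ is automatically an isomorphism; thus it suffices to prove $\Ann_{\Uqf}(\mu_\lambda) \subseteq \Ann_{\Uqf}(\mu_{\lambda'})$. Expanding any $u \in \Uqf$ via the PBW decomposition $\Uq = \Uq(\overline{\n}) \otimes \C\Lambda \otimes \Uq(\n)$ as $u = \sum_{a,b} F^{(a)} p_{a,b}(K) E^{(b)}$, with $p_{a,b}(K) \in \C\Lambda$, the terms with $b \neq 0$ annihilate $\mu_\lambda$, and PBW-freeness of $M_\lambda$ over $\Uq(\overline{\n})$ gives
\[
u\mu_\lambda \;=\; \sum_a \lambda\bigl(p_{a,0}(K)\bigr)\, F^{(a)}\mu_\lambda,
\]
so $u\mu_\lambda = 0$ iff $\lambda(p_{a,0}(K)) = 0$ for every $a$. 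The argument thereby reduces to the following key claim: \emph{for every $u \in \Uqf$ and every $a$, the coefficient $p_{a,0}(K)$ lies in $A$.} Granting this, $\phi(\lambda) = \phi(\lambda')$ yields $\lambda(p_{a,0}(K)) = \lambda'(p_{a,0}(K))$ for every $a$, and the required equality of annihilators follows.

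The main obstacle is this structural key claim. I would prove it by induction on word length in the algebra generators of $\Uqf$. On the generators $K_{2\omega_i}$, $K_{2\omega_i - \alpha_i}E_{\alpha_i}$, $K_{2\omega_i}F_{\alpha_i}$ from Lemma~\ref{Its a big subalgebra!}~(i), the PBW $E^{(0)}$-coefficients are $K_{2\omega_i}$, $0$ and a $q$-scalar multiple of $K_{2\omega_i}$ respectively, all visibly in $A$. For products the only non-trivial contribution to $p_{a,0}(K)$ arises, while pushing the product to PBW order, from the commutator $[E_\alpha, F_\alpha] = (K_\alpha - K_{-\alpha})/(q_\alpha - q_\alpha^{-1})$; the $K$-prefixes on the generators are designed precisely so that the resulting $K_{\pm\alpha}$-factors, combined with the surrounding $K$'s, always land in $A$ — for example, $K_{4\omega_i - \alpha_i}(K_{\alpha_i} - K_{-\alpha_i}) = K_{4\omega_i} - K_{4\omega_i - 2\alpha_i}$, and $4\omega_i - 2\alpha_i$ is an integer combination of the $2\omega_j$'s via the Cartan relations $\alpha_i = \sum_j a_{ij}\omega_j$. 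The $ad_r(\C\Lambda)$-weight decomposition of $\Uqf$ supplies the book-keeping needed to run this induction cleanly.
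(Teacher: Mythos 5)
The paper states this lemma without proof, so I can only assess your argument on its own terms. Part one, and your reduction of part two to the containment of annihilators via the PBW expansion $u=\sum_{a,b}F^{(a)}p_{a,b}(K)E^{(b)}$, are both fine. The problem is your key structural claim, which is false as stated: it is \emph{not} true that $p_{a,0}(K)$ lies in $A=\Uqf\cap\C\Lambda$ for every $u\in\Uqf$. The quantum Casimir $z=\frac{qK+q^{-1}K^{-1}}{(q-q^{-1})^2}+FE$ of $\Uq(\mathfrak{sl}_2)$ lies in $\ZHC\subseteq\Uqf$, yet its toral coefficient $p_{0,0}$ involves $K^{-1}=K_{-\alpha}$, which is not $ad_r$-integrable; the paper's own Example records that $\Uqf\cap\C\Lambda=\C[K]$, with no negative powers. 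Your proposed induction also cannot be run as described, because the elements of Lemma \ref{Its a big subalgebra!}(i) do not generate $\Uqf$ as an algebra: the same Example adjoins $z$ as an extra generator, and indeed $z$ is not in $\C\langle K,KF,E\rangle$ (every toral PBW coefficient of that subalgebra is a polynomial in $K$ with non-negative powers). Part (ii) of that lemma only gives a tensor decomposition of $\Uq$ over $A$, not a generating set for $\Uqf$.

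The strategy is salvageable, but the correct claim is weaker and its proof requires more input. What one needs is that $p_{a,0}(K)$ lies in $\C2\Lambda$, the group algebra of the sublattice $2\Lambda$ (allowing $K_{2\mu}$ with $\mu$ non-dominant), together with the observation that $\phi(\lambda)=\phi(\lambda')$ forces $\lambda$ and $\lambda'$ to agree on all of $\C2\Lambda$: they agree on the semigroup $\{K_{2\mu}:\mu\in\Lambda_+\}$, which generates $2\Lambda$ as a group, and $\lambda,\lambda'$ are characters. To prove $p_{a,0}\in\C2\Lambda$ one should invoke the Joseph--Letzter description $\Uqf=\bigoplus_{\mu\in\Lambda_+}ad_r(\Uq)(K_{2\mu})$ and track the toral contributions: each $ad_r(E_\alpha)$ inserts a factor $K_{-\alpha}$ together with an $E_\alpha$, and for a monomial to contribute to a coefficient $p_{a,0}$ (no surviving $E$'s) that $E_\alpha$ must be absorbed by a commutator $[E_\alpha,F_\alpha]$, producing $K_{\pm\alpha}$; the net toral factor $K_{-\alpha}K_{\pm\alpha}$ lies in $\{1,K_{-2\alpha}\}\subset\C2\Lambda$. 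Without this (or some equivalent use of the Joseph--Letzter structure theory, which is where the lemma really comes from), the proof is incomplete.
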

Remark that we could also have considered Verma modules for $\Uqf$
to be parameterized by $T_{\Lambda^{\fin}}$. It is sometimes a
subtle issue which version of the quantum group one should use,
i.e. which functions on the tori one should include. We have
chosen to work with the simply connected version - because it is
free over $\ZHC$ - but note that \emph{all} versions of $\Uq$ have
the same Verma modules, as sets, and that they are parameterized
by the spectra of the torus part of the quantum group in question.
\begin{definition}\label{Defining universal Verma} Let $\wMPq := \UqP/\UqP \cdot \Uq(\uo)_{>0}$ be a
``$P_q$-universal" Verma module for $\UqP$. For $\lambda \in
T_\Lambda$ let $\MPql := \wMPq \ot_{\ZHC(\Uq(\lo))} \C_\lambda$.
\end{definition}
Here $\C_\lambda$ is the $1$-dimensional representation for
$\ZHC(\Uq(\lo))$ on which $z \in \ZHC(\Uq(\lo))$ acts by
$\chi_{\lo,\lambda}(z)$. Observe that the right adjoint action
$ad_r$ of $\Uq(\p)$ on $\wMPq$ integrates to a $P_q$-action. We
shall always consider $\wMPq$ with this $P_q$-action and in
particular its restricted $L_q$-action.

Observe that for $P=B$ we have that $\MBql$ coincides with the
usual Verma module $M_\lambda$.

By corollary \ref{finite parabolic deco} we see that the canonical
map $\Uq(\overline{\p})^\lf \to \wMPq$ is an isomorphism of
$L_q$-modules.

\subsubsection{} Denote by $\Lambda(V)$ the set of weights of a
$\Uq$-module $V$. We shall need a quantum version of a classical
result of Bernstein and Gelfand:
\begin{lemma}\label{central character lemma} Let $V$ be a finite
dimensional $\Uq^{\fin}$-module and assume that $M$ is a
$\Uq^{\fin}$-module such that $\Ker \chi_\lambda \cdot M = 0$.
Then $I \cdot M \ot V = 0$, where $I := \prod_{\mu \in \Lambda(V)}
\Ker \chi_{\lambda + \mu}$.
\end{lemma}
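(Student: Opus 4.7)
The plan is to first establish the statement for $M = M_\lambda$ by a Verma module filtration, and then upgrade to the general case using the quantum Duflo formula recalled just above.

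For the Verma case, I would construct a filtration $0 = N_0 \subset N_1 \subset \cdots \subset N_n = M_\lambda \ot V$ by $\Uq$-submodules with $N_k/N_{k-1} \cong M_{\lambda + \mu_k}$, where $\mu_1,\ldots,\mu_n$ is the list of weights of $V$ (with multiplicity) in an order where higher weights come first. Pick a compatible weight basis $v_1,\ldots,v_n$; the spans $V_k := \operatorname{span}(v_1,\ldots,v_k)$ are $\Uq(\bo)$-stable, and I set $N_k := \Uq\cdot(v_\lambda \ot V_k)$, where $v_\lambda$ is the canonical generator of $M_\lambda$. The coproduct formulas for $\Delta(E_\alpha)$ and $\Delta(K_\mu)$ imply that the image of $v_\lambda\ot v_k$ in $N_k/N_{k-1}$ is a highest weight vector of weight $\lambda + \mu_k$, giving a surjection $M_{\lambda+\mu_k}\twoheadrightarrow N_k/N_{k-1}$; comparing characters against $\operatorname{ch}(M_\lambda)\operatorname{ch}(V) = \sum_k \operatorname{ch}(M_{\lambda + \mu_k})$ forces each surjection to be an isomorphism. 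Since each subquotient then has central character $\chi_{\lambda + \mu_k}$, iterating the inclusions $\Ker \chi_{\lambda + \mu_k}\cdot N_k \subseteq N_{k-1}$ along the filtration shows that $I$ annihilates $M_\lambda\ot V$.

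For general $M$, since $\Ker\chi_\lambda$ is central and kills $M$, so does $\Uqf\cdot\Ker\chi_\lambda$; by the quantum Duflo formula this ideal equals $\Ann_{\Uqf}(M_\lambda)$, giving $\Ann_{\Uqf}(M) \supseteq \Ann_{\Uqf}(M_\lambda)$. Fix $x \in I$. By the Verma case, $x\cdot(M_\lambda\ot V) = 0$, which means that $\Delta(x)\in\Uqf\ot\Uq$ lies in the kernel of the tensor product of action maps $\Phi_1\ot\Phi_2\colon\Uqf\ot\Uq\to\End(M_\lambda)\ot\End(V)$ (the identification $\End(M_\lambda \ot V) = \End(M_\lambda)\ot\End(V)$ being valid since $V$ is finite-dimensional). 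Over the field $\C$,
$$\Ker(\Phi_1\ot\Phi_2) \;=\; \Ann_{\Uqf}(M_\lambda)\ot\Uq \;+\; \Uqf\ot\Ann_{\Uq}(V),$$
so we may decompose $\Delta(x) = A + B$ with $A$ in the first summand and $B$ in the second. Then $A\cdot(M\ot V) = 0$ because its first tensor factor annihilates $M$ (using $\Ann_{\Uqf}(M_\lambda)\subseteq\Ann_{\Uqf}(M)$), and $B\cdot(M\ot V) = 0$ because its second tensor factor annihilates $V$. Hence $x\cdot(M\ot V) = 0$, and $I\cdot(M\ot V) = 0$ follows.

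The main technical ingredient is Step 1: while the construction of the filtration follows the classical Bernstein-Gelfand pattern, checking that each map $M_{\lambda + \mu_k}\twoheadrightarrow N_k/N_{k-1}$ is actually an isomorphism relies on the Hopf-algebraic coproduct formulas for $\Uq$ together with a character computation. Once the Verma filtration is in hand, the reduction in Step 2 is formal, using only Duflo's formula and the elementary identity for kernels of tensor products of linear maps over a field.
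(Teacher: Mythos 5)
Your proof is correct and follows essentially the same route as the paper's: a Verma filtration of $M_\lambda \ot V$ for the special case, followed by the quantum Duflo formula to transfer the annihilation from $M_\lambda$ to a general $M$ killed by $\Ker \chi_\lambda$ (your explicit decomposition of $\Ker(\Phi_1\ot\Phi_2)$ is precisely the Bernstein--Gelfand argument that the paper cites rather than spells out, applied directly to $M$ instead of to the regular module $\Uql$). The only point worth flagging is that, like the paper's detailed argument, yours covers only generic $q$ since that is where the quantum Duflo formula is invoked; the paper additionally alludes to (but omits) a continuity/integral-form argument for the remaining values of $q$.
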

\begin{proof}
We shall prove \ref{Verma module tensor ann 2} only for the case
that $q$ is generic. The general case can be deduced from this,
using a continuity argument and integral forms of $\Uq, I$ and
$V$, but we have omitted the details. Its enough to prove that
\begin{equation}\label{Verma module tensor ann 2}
I \cdot \Uql \ot V = 0.
\end{equation}
Using a suitable $\Uq(\bo)^{\fin}$-filtration on $V$ with
1-dimensional subquotients a standard argument shows that
\begin{equation}\label{Verma module tensor ann}
I \cdot M_\lambda \ot V = 0.
\end{equation}
 By the quantum Duflo theorem we have that $M_{\lambda}$ is a
 faithful representation of $\Uql$. Thus we are in the position to rerun the argument from the proof of the enveloping
algebra case, Theorem 3.5 in \cite{BerGel}, to deduce that
\ref{Verma module tensor ann} implies \ref{Verma module tensor ann
2} in this case.
\end{proof}

\section{Parabolic quantum flag manifold}
In \cite{BK06} a quantum flag manifold, or more precisely the
category of quasi-coherent sheaves on it, was defined. Here we use
the same method to quantize a parabolic flag manifold.
\subsection{Definition of the parabolic quantum flag manifold}\label{Definition of the quantum flag manifold}
\subsubsection{} Let $\triangle$ be the comultiplication on $\Oq$. The composition
\begin{equation}\label{e2}
\Oq \overset{\triangle}{\to} \Oq \otimes \Oq \to \Oq(P) \otimes
\Oq\end{equation} defines a left $\Oq(P)$-comodule structure on
$\Oq$.

\smallskip

\medskip

A $P_q$-equivariant sheaf on $G_q$ is a triple $(F, \alpha,
\beta)$ where $F$ is a vector space, $\alpha: \Oq \otimes F \to F$
a left $\Oq$-module action and $\beta: F \to \Oq(P) \otimes F$ a
left $\Oq(P)$-comodule action such that $\alpha$ is a left
comodule map, where we consider the diagonal comodule structure on
$\Oq \otimes F$. Morphisms of $P_q$-equivariant sheaves on $G_q$
are $\Oq$-linear and $P_q$-linear maps.
\begin{defi}\label{hejsan} We denote by $\MPG$ the category of $P_q$-equivariant
sheaves on $G_q$.
\end{defi} We shall refer to objects of $\MPG$ as
$(\Oq,P_q)$-modules. Classically, let $\Po = G/P$ be the parabolic
flag manifold. There is an equivalence $\MOD(\BGG(G), P) \cong
\MOD(\BGG_{\Po})$, where $\MOD(\BGG_{\Po})$ is the category of
quasi-coherent sheaves on $\Po$. For this reason we like to think
of objects of $\MPG$ as ``(quasi-coherent) sheaves on $\Po_q$".

\subsection{Vector bundles and line bundles}\label{Vector bundles and line bundles} \subsubsection{} Let $V$ be a finite
dimensional $P_q$-module. Then we have
$$
\BGG_q \ot V \in \MPG
$$
where $P_q$ acts (i.e. $\Oq(P)$ coacts) diagonally and $\Oq$ acts
on the first factor. Since $\Oq$ is a $G_q\text{-}P_q$-bimodule
(i.e. an $\Oq(P)\text{-}\Oq$-bicomodule) we see that $\BGG_q \ot
V$ comes with a left $G_q$-action as well. We can think of $\BGG_q
\ot V$ as a $G_q$-equivariant vector bundle on $(\Po)_q$.

When $\lambda$ is a $P_q$-character (in which case also $-\lambda$
is a $P_q$-character) we put $\C_{\lambda} := V_{P_q}(\lambda)$
for the corresponding one-dimensional representation and we denote
by
$$
\BGG_{\Po_q}(\lambda) := \Oq \ot \C_{-\lambda} \in \MPG
$$
the corresponding line bundle on $\Po_q$. We shall also use the
notation
$$
M(\lambda) := M \ot \C_{-\lambda} \in \MPG,
$$
for $M \in \MPG$ and $\lambda$ a $P_q$-character.

\subsection{Global sections, direct and inverse image}\label{global sections
Section} \subsubsection{} There is the \emph{global section
functor} $$\Gamma: \OPG \to \MOD(\C), \ M \mapsto M^{P_q}, \ M \in
\OPG.
$$
Let $Q \supseteq P$ be another parabolic subgroup of $G$
(actually, in this section we don't need that $Q$ and $P$ are
parabolic). Let $\Q = G/Q$ and think of a symbolic map $\pi_q:
{\Po}_q \to {\Q}_q$ as a ``quantization" of the projection
$\pi:\Po \to \Q$. Recall that we have the adjoint pair
$\ResQP:\MOD(Q_q)  \rightleftarrows \MOD(P_q): \IndPQ$. It induces an
adjoint pair of functors
\begin{equation}\label{inverse direct def}
\pQPus: \OQG \rightleftarrows \OPG:\pQPls.
\end{equation}
Here, $\pQPus M = \ResQP M$ as a $P_q$-module and the $\Oq$-module
structure is the given by that on $M$, for $M \in \OQG$.

Similarly, $\pQPls M = \IndPQ M$ as a $Q_q$-module and its
$\Oq$-module structure is as follows:

Let $\triangle': \Oq \to \Oq(Q) \ot \Oq$ be the left coaction of
$\Oq(Q)$ on $\Oq$ that is obtained by integrating the action
$\mu_r$ of $\Uq(Lie \, Q)$ on $\Oq$. Let $f \ot m \in \pQPls M$
and let $\alpha \in \Oq$. Write $\triangle' \alpha = \alpha_1 \ot
\alpha_2$. Then we have $\alpha \cdot (f \ot m) = \alpha_1f \ot
\alpha_2 m.$

\medskip

\noindent We have that $\pQPus$ is exact and $\pQPls$ is right
adjoint to $\pQPus$; thus $\pQPls$ is left exact and maps
injectives to injectives. Let
$$\Theta: \OGG \isoto \MOD(\C), \, V
\mapsto V^G,$$ be the canonical equivalence. There is a natural
equivalence of functors $\Theta \circ \pGPls \cong \Gamma$.

\begin{lemma}\label{enough injectives lemma} $\OPG$ has enough injectives, hence the derived functors
$R\Gamma$ and $R\pQPls$ are well-defined.
\end{lemma}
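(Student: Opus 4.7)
The plan is to show that $\OPG$ is a Grothendieck abelian category, at which point enough injectives follows automatically from Grothendieck's theorem. The derived functors $R\Gamma$ and $R\pQPls$ are then well-defined because both $\Gamma = (-)^{P_q}$ and $\pQPls$ are left exact between abelian categories with enough injectives — $\pQPls$ being the right adjoint of the exact functor $\pQPus$ as noted just before the lemma, and $\Gamma$ being essentially $\Theta \circ \pGPls$ (or directly a fixed-point functor on the $P_q$-structure, which has an obvious left adjoint).

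First I would verify the abelian structure on $\OPG$. Kernels, cokernels, images, and arbitrary (co)products of morphisms in $\OPG$ are formed at the level of vector spaces and inherit both the $\Oq$-module and the $\Oq(P)$-comodule structure; the equivariance compatibility is preserved under these operations because it is expressed by commutativity of a diagram of linear maps. The same observation shows that all small (co)limits exist in $\OPG$ and that filtered colimits are exact, since this is true in vector spaces.

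The only non-routine step is producing a set of generators. Given $M \in \OPG$ and $m \in M$, the standard fact that comodules over a coalgebra are locally finite provides a finite-dimensional $P_q$-subcomodule $V \subseteq M$ with $m \in V$. The $\Oq$-action then supplies an $\Oq$-linear map $\Oq \otimes V \to M$, $a \otimes v \mapsto a \cdot v$, where $\Oq \otimes V$ carries the diagonal $P_q$-coaction coming from the coaction on $\Oq$ in (\ref{e2}) and the given coaction on $V$, as in Section \ref{Vector bundles and line bundles}. That this map is $P_q$-equivariant uses that the coaction $\Oq \to \Oq(P) \otimes \Oq$ is an algebra homomorphism (being the composition of the coproduct $\triangle$ with the Hopf-algebra projection $\Oq \to \Oq(P)$), together with the equivariance of the $\Oq$-action on $M$. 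Since isomorphism classes of finite-dimensional $P_q$-modules form a set, the family $\{\Oq \otimes V\}_V$ is a set of generators, so $\OPG$ is Grothendieck.

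The main obstacle, modest as it is, lies in this last verification: checking that the diagonal $P_q$-structure on $\Oq \otimes V$ is compatible with the $\Oq$-module structure and that every element of $M$ is hit by some $\Oq \otimes V \to M$. Both reduce to the algebra-homomorphism property of the coaction (\ref{e2}) and to local finiteness of comodules, and both are already implicit in the definitions of Section \ref{Definition of the quantum flag manifold}. Once the generating set is in hand, Grothendieck's theorem closes the argument.
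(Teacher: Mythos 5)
Your argument is correct, but it is not the route the paper takes. The paper's proof is a two-line adjunction transfer: for the projection $p\colon G \to \Po$ one has the adjoint pair $p_q^* \colon \OPG \rightleftarrows \Oq\MOD \colon p_{q*}$ with $p_q^*$ exact (it just forgets the equivariant structure), so $p_{q*}$ preserves injectives; choosing an embedding $p_q^*M \hookrightarrow I$ into an injective $\Oq$-module and composing with the (injective, counit-split) unit $M \to p_{q*}p_q^*M$ embeds $M$ into the injective object $p_{q*}I$. You instead prove the stronger statement that $\OPG$ is a Grothendieck category and invoke Grothendieck's theorem. Your verification is sound: (co)kernels and filtered colimits are computed on underlying vector spaces and the equivariance condition survives them, local finiteness of $\Oq(P)$-comodules gives a finite-dimensional subcomodule $V \ni m$, and the restricted action map $\Oq \ot V \to M$ is a morphism in $\OPG$ precisely because the coaction (\ref{e2}) is an algebra map and $\alpha$ is a comodule map — facts the paper already records in Definition \ref{hejsan} and Section \ref{Vector bundles and line bundles} (your generating family is a weak form of Proposition \ref{aff}(3), without needing ampleness). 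The trade-off: the paper's argument is shorter and exhibits the injectives explicitly as coinduced objects $p_{q*}I$, which is convenient for computing $R\pQPls$ as $R\IndPQ$ of underlying modules; your argument costs more bookkeeping but yields the stronger AB5-plus-generator structure (hence injective hulls, functorial resolutions, and the unbounded derived category for free).
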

\begin{proof}
Let $p: G \to \Po$  be the projection, let $M \in \OPG$ and let
$p^*_q M \to I$ be an injection of $p^*_q M$ into an injective
object $I \in \MOD(\Oq)$. Then $p_{q*} I$ is injective and we get an
injective composition
$$
M \to p_{q*} p^{*}_{q} M \to p_{q*} I.
$$
\end{proof}
Note that $R\pi^{\Q'q}_{\Q q*} \circ R\pQPls = R\pi^{\Q'q}_{\Po
q*}$, for $Q' \supseteq Q$ a third parabolic, since $R\pi_{q*}$
maps injectives to injectives.

\subsection{Ampleness of line bundles on $\Po_q$}
\subsubsection{} Let $\lambda \in T_\Lambda$ and fix $P$. We let $\lambda >>0$ mean
that $\lambda$ is a $P_q$-character and $\lambda(H_\alpha) >>0$
for each $\alpha \in \Delta-\Delta_P$. Our result here is
\begin{proposition}\label{aff} We have
\begin{enumerate}
 \item $R^{>0}\Gamma(\BGG_{\Po_q}(\lambda)) = 0$, for $\lambda \in
\Lambda_+$ a $P_q$-character.

\item The global section functor $\Gamma$ on $\MPG$ has finite
cohomological dimension.

\item Each object in $\MPG$ is a quotient of a direct sum of
$\BGG_{\Po_q}(\lambda)$'s.

\item Any surjection $M \twoheadrightarrow M'$ of noetherian (i.e.
$\Oq$-coherent) objects  in $\MPG$ induces a surjection
$\Gamma(M(\lambda)) \twoheadrightarrow \Gamma(M'(\lambda))$ for
$\lambda >> 0$.

\item If $M \in \MPG$ is noetherian, then
$R^{>0}\Gamma(M(\lambda)) = 0$ for $\lambda >>0$.
\end{enumerate}
\end{proposition}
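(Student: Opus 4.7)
The proposition is a quantum analog of Serre's ampleness theorem on the projective variety $G/P$. My strategy is to establish (1) and (3) first, then deduce (2), and finally obtain (5) and (4) by standard Serre vanishing arguments.

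For (1), the natural equivalence $\Theta \circ \pGPls \cong \Gamma$ of Section \ref{global sections Section} gives $R\Gamma(\BGG_{\Po_q}(\lambda)) \cong R\IndPG(\C_{-\lambda})$ whenever $\lambda$ is a $P_q$-character. Induction in stages through $B \subset P \subset G$ yields $R\IndBG \cong R\IndPG \circ R\IndBP$, and because $\C_{-\lambda}$ already carries a full $P_q$-structure one has $R\IndBP(\C_{-\lambda}) \cong \C_{-\lambda}$ concentrated in degree zero (Kempf vanishing on the single-point flag variety of the Levi is trivial). Thus $R^i\IndPG(\C_{-\lambda}) \cong R^i\IndBG(\C_{-\lambda})$, and for $\lambda \in \Lambda_+$ the right-hand side vanishes for $i > 0$ by the quantum Kempf vanishing theorem (due to Andersen--Polo--Wen and Ryom-Hansen in the quantum setting).

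For (3), any $M \in \MPG$ is a filtered union of its finitely generated $(\Oq,P_q)$-subobjects, reducing to the finitely generated case. A finite-dimensional $P_q$-subcomodule $V \subset M$ that $\Oq$-generates $M$ yields a surjection $\Oq \otimes V \twoheadrightarrow M$ (diagonal $P_q$-action, $\Oq$ acting on the first factor). I would then present $\Oq \otimes V$ itself as a quotient of a direct sum of line bundles via the \emph{trivialization trick}: if $W$ is a $P_q$-module that extends to a $G_q$-module, then in $\MPG$ one has $\Oq \otimes W \cong \BGG_{\Po_q}(0)^{\oplus \dim W}$, the isomorphism absorbing the $W$-action into the $G_q$-coaction on $\Oq$ via the antipode. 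After a sufficient $P_q$-character twist, every finite-dimensional $P_q$-module is a quotient of the restriction of a $G_q$-module (using quantum Peter--Weyl together with the quotient Hopf algebra $\Oq \twoheadrightarrow \Oq(P)$), and combining these gives the required surjection from a direct sum of line bundles $\BGG_{\Po_q}(\lambda_i)$. Part (2) is then immediate: (3) resolves any $M$ by direct sums of line bundles, (1) kills the higher cohomology on each term once they are shifted to dominant characters, and a standard spectral sequence argument bounds the cohomological dimension of $\Gamma$ by $\dim_\C G/P$.

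Parts (5) and (4) are quantum Serre vanishing statements deduced from the above. For (5), a noetherian $M$ admits a finite resolution $P_\bullet \to M$ by finite direct sums of line bundles, by (3) combined with the cohomological dimension bound from (2). Twisting by $\lambda \gg 0$ renders all the individual twists into dominant $P_q$-characters, so part (1) gives $R^{>0}\Gamma(P_i(\lambda)) = 0$ for each $i$, and the hyper-cohomology spectral sequence yields $R^{>0}\Gamma(M(\lambda)) = 0$. Part (4) then follows by applying (5) to the noetherian kernel $K$ of the surjection $M \twoheadrightarrow M'$: the vanishing $R^1\Gamma(K(\lambda)) = 0$ forces the long exact sequence to yield a surjection $\Gamma(M(\lambda)) \twoheadrightarrow \Gamma(M'(\lambda))$ for $\lambda \gg 0$. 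The main obstacle is part (3): showing that every $P_q$-module, after a suitable $P_q$-character twist, is a quotient of the restriction of a $G_q$-module. Classically this is immediate from the theory of ample line bundles on $G/P$; in the quantum setting it requires careful bookkeeping with quantum Peter--Weyl, the Hopf-algebraic compatibility of the $\Oq$-action with the $P_q$-coaction, and the structure of $\Oq(P)$ as a quotient Hopf algebra of $\Oq$.
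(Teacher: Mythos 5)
Your treatment of part (1) is essentially the paper's: induction in stages through $B \subset P \subset G$, the identification $R\IndBP(\C_{-\lambda}) = \C_{-\lambda}$, and quantum Kempf vanishing for $B$ from \cite{APW}. One small imprecision: the flag variety of the Levi is $L/(L\cap B)$, not a point, so $R^{>0}\IndBP(\C_{-\lambda})=0$ is not ``trivial''; the paper gets it by applying \cite{APW} to the reductive group $L$ and observing that $\lambda$ vanishes on $\Delta_P$, so the relevant line bundle on the Levi flag variety is trivial. Your sketches of (3), (4) and (5) follow the route the paper itself delegates to \cite{BK06}, with one caveat in (5): part (2) does not provide a \emph{finite} resolution by direct sums of line bundles (finite cohomological dimension of $\Gamma$ is not finite homological dimension of the category); the standard fix is to truncate an infinite such resolution after $n=\mathrm{cd}(\Gamma)$ steps and dimension-shift, a single $\lambda \gg 0$ rendering the finitely many twists that occur acyclic.

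The genuine gap is part (2). You propose to deduce finite cohomological dimension from (1) and (3) by resolving $M$ by direct sums of line bundles and invoking ``a standard spectral sequence argument'' bounding $\mathrm{cd}(\Gamma)$ by $\dim G/P$. This is circular: a \emph{left} resolution by $\Gamma$-acyclic objects does not compute $R\Gamma(M)$ for the left-exact functor $\Gamma$, and dimension-shifting along $0 \to K_n \to P_{n-1} \to \cdots \to P_0 \to M \to 0$ only yields $R^i\Gamma(M) \cong R^{i+n}\Gamma(K_n)$ for $i\geq 1$, which gives nothing without an a priori bound --- precisely what (2) asserts. Nor is there a quantum substitute for Grothendieck's vanishing theorem, since there is no underlying topological space of dimension $\dim G/P$. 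The paper's argument is different and non-circular: by the tensor identity and part (1) applied to $\C_0$ one has $R\IndBP(M) = M \ot R\IndBP(\C_0) = M$ for \emph{every} $M \in \MPG$, hence $R\IndPG(M) = R\IndBG(M)$, and the right-hand side has cohomological dimension at most $\dim\B$ by \cite{APW} (i.e.\ by the case $P=B$ settled in \cite{BK06}). Some version of this composition-of-functors identity is needed; (2) does not follow formally from (1) and (3).
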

\begin{proof} The case $P=B$ was dealt with in \cite{BK06}. We
shall reduce to that case.

\medskip

\noindent $(1)$ Let $\lambda \in \Lambda_+$ be a $P_q$-character.
We must prove that $R^{>0}\Gamma(\BGG_{\Po_q}(\lambda)) = 0$. By
the tensor identity we have $R \Gamma(\BGG_{\Po_q}(\lambda)) = \Oq
\ot RInd^{G_q}_{P_q}(\C_{-\lambda})$; hence, it is enough to prove
the \emph{Kempf vanishing}
\begin{equation}\label{Kempf P-vanishing}
R^{>0} Ind^{G_q}_{P_q}(\C_{-\lambda}) = 0.
\end{equation}
By \cite{APW},  \ref{Kempf P-vanishing} holds for $P= B$ (and any
$\lambda \in \Lambda_+$). Thus we have
$$
Ind^{G_q}_{B_q}(\C_{-\lambda}) = R Ind^{G_q}_{B_q}(\C_{-\lambda})
= R Ind^{G_q}_{P_q} \circ  R Ind^{P_q}_{B_q}(\C_{-\lambda}).
$$
Now, $R Ind^{P_q}_{B_q}(\C_{-\lambda}) = R Ind^{L_q}_{(L \cap
B)_q}(\C_{-\lambda}),$ with trivial $\Uq(\uo)$-action on the
latter. Since $L \cap B$ is a Borel subgroup of the reductive
group $L$, \cite{APW} applies again, so we get $R
Ind^{P_q}_{B_q}(\C_{-\lambda}) = Ind^{P_q}_{B_q}(\C_{-\lambda})$.
Since $\lambda=0$ on $\Delta_P$ it is clear that
$Ind^{P_q}_{B_q}(\C_{-\lambda}) = \C_{-\lambda}$. Thus,
$$
RInd^{G_q}_{P_q}(\C_{-\lambda}) = Ind^{G_q}_{B_q}(\C_{-\lambda}).
$$
Thus \ref{Kempf P-vanishing} holds and $1)$ is proved.

\medskip

\noindent $(2)$ Let $M \in \MPG$. Then we have from the tensor
identity and the result of $(1)$ applied to $\C_0$ that
$$
 RInd^{P_q}_{B_q}(M) = M \ot RInd^{P_q}_{B_q}(\C_0) = M.
$$
Thus
$$
RInd^{G_q}_{B_q}(M) = RInd^{G_q}_{P_q}\circ RInd^{P_q}_{B_q}(M) =
RInd^{G_q}_{P_q}(M),
$$
which proves that $RInd^{G_q}_{P_q}$ has finite cohomological
dimension, since we know from \cite{APW} that $RInd^{G_q}_{B_q}$
has cohomological dimension $\leq \dim \B$.

\medskip

\noindent $(3),(4)$ and $(5)$ can now formally be deduced from
$(1), (2)$ and Lemma \ref{enough injectives lemma} by the same
arguments as those in \cite{BK06}.
\end{proof}
\begin{rem} Using the multi-graded version, Proposition 2.1, in
\cite{BK06}, of a theorem of Artin and Zhang about non-commutative
projective schemes one can deduce that $\MPG \cong
\mathbf{Proj}(A_q)$, where $A_q$ is the ring  $\oplus_{\lambda
\in P-characters} \Gamma (\BGG_{\Po q}(\lambda))$.
\end{rem}

\section{Modules over extended quantum differential operators}
We define some algebras of quantum differential operators on $G_q$
and then we define categories of quantum $\D$-modules on $\Po_q$.
\subsection{Algebras of differential operators on $G_q$}
\subsubsection{} The construction given here is a version of the Heisenberg double,
see \cite{M93}. Recall the actions $\mu_l$ and $\mu_r$ of $\Uq$ on
$\Oq$ from \ref{23}, the left and right adjoints action $ad_l$ and
$ad_r$ of $\Uq$ on itself. In \cite{BK06} we defined the ring of
differential operators $\Dq$ on $G_q$ to be the smash product $\Dq
:= \Oq \# \Uq$ with respect to the action $\mu_l$.

Both the algebras $\Oq \ot \Uq$ and $\Dq$ are right $\Uq$-module
algebras with respect to the action  $\mu_r$ on $\Oq$ and the
action $ad_r$ on $\Uq$. We shall refer to these actions as the
right adjoint actions of $\Uq$ on $\Oq \ot \Uq$ and on $\Dq$,
respectively, and denote them by $ad_r$.

\medskip

\noindent The algebra $\Dq$ is suitable in relation to equivariant
sheaves of differential operators on $\B_q$, but it turns out that
for each parabolic $P$ it is better to use a different version of
it (see Remark \ref{explain it}). Since $\UqP$ is a left coideal
in $\Uq$ we can define subalgebras of $\Dq$ by
\begin{definition} Let $\DqP = \Oq \# \UqP$.
\end{definition}
Observe that this coincide with our earlier definition: $\DqP$ =
$ad_r(\Uq(\lo))$-integrable part of $\Dq$. Note that $\DqP = \Dq
\iff \lo = \h$. We write $\Dq^{\fin} = \Dq^{\g\text{-}\fin}$.
$\DqP$ is a $\Uq$-submodule algebra of $\Dq$.

\medskip

\noindent The action $ad_r |_{\Uqf}$ integrates to an action
$$
coad: \Uqf \to \Oq \ot \Uqf.
$$
This makes $\Uqf$ an $\Oq$-comodule algebra, i.e. $coad$ is an
algebra homomorphism. From the tensor identity we have $\Uqf \cong
coad(\Uqf) = (\Oq \ot \Uqf)^{G_q}$. $coad$ is however not the
embedding that we are primary interested in. We have
\begin{lemma}\label{right invariant embedding lemma}
There is an injective algebra homomorphism $\epsilon_l:
(\Uqf)^{op} \to \Dq^{\fin}$ whose image is the space of right
$G_q$-invariants $(\Dq^{\fin})^{G_q}$.\footnote{Recall that in
classical Lie theory differentiating the \emph{left} action of $G$
on itself gives an embedding of $\g$ into
\emph{right} invariant vector fields on $G$.}
\end{lemma}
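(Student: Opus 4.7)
The plan is to define $\epsilon_l$ via the integrated coaction $coad:\Uqf\to\Oq\ot\Uqf$ already introduced just above the lemma, composed with the canonical identification of $\Oq\ot\Uqf$ with the underlying vector space of $\Dqf=\Oq\#\Uqf$. Classically this is exactly the map that sends $X\in\g$ to the right-invariant vector field $X^R$ on $G$, characterized by $X^R_h=\tfrac{d}{dt}\big|_{0}\exp(tX)\cdot h$; the appearance of $(\Uqf)^{op}$ reflects the classical fact that, when $\g$ is identified with left-invariant vector fields, right-invariant vector fields form an opposite Lie algebra.

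With this definition, injectivity and the image calculation are essentially immediate. Injectivity follows from the counit axiom $(\epsilon_{\Oq}\ot \mathrm{id})\circ coad=\mathrm{id}_{\Uqf}$. For the image, the text has already specified that the right $G_q$-action on $\Dqf$ is the tensor action $\mu_r\ot ad_r$ on $\Oq\ot\Uqf$, so $(\Dqf)^{G_q}=(\Oq\ot\Uqf)^{G_q}$; combined with the identity $coad(\Uqf)=(\Oq\ot\Uqf)^{G_q}$ recorded in the paragraph preceding the lemma, this gives $\epsilon_l(\Uqf)=(\Dqf)^{G_q}$.

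The main step, and the expected obstacle, is verifying that $\epsilon_l:(\Uqf)^{op}\to\Dqf$ is an algebra homomorphism, i.e.\ $\epsilon_l(uv)=\epsilon_l(v)\,\epsilon_l(u)$. Writing $coad(u)=\sum u_{(-1)}\ot u_{(0)}$ in Sweedler notation (so that $ad_r(w)(u)=\sum\langle w,u_{(-1)}\rangle u_{(0)}$ for $w\in\Uq$), one expands the right-hand side using the smash-product rule and the formula $\mu_l(x)(a)=\sum a_{(1)}\langle x,a_{(2)}\rangle$, and reduces to an identity of iterated coproducts and pairings. Two inputs drive the verification: (a) that $coad$ is itself an algebra map, which is exactly the statement ``$\Uqf$ is an $\Oq$-comodule algebra'' recorded just before the lemma, and (b) the antipode appearing in $ad_r(w)(u)=\sum S(w_{(1)})uw_{(2)}$, which supplies the order-reversal that turns multiplication in $\Uqf$ into multiplication in $\Dqf$ in the opposite order. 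The computation is notation-heavy but formal: it is a standard Heisenberg-double manipulation, and the only substantive content beyond the classical enveloping-algebra case is the restriction to the $\g$-integrable part, which is precisely what makes $coad$ well-defined at all.
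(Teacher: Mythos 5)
Your definition of $\epsilon_l$ --- the coaction $coad$ followed by the vector--space identification of $\Oq\ot\Uqf$ with $\Dq^{\fin}$ --- is not the map the paper constructs, and the step you defer as ``notation-heavy but formal'' is exactly where the content of the lemma sits and where your argument has a genuine gap. The input you cite, that $coad$ is an algebra homomorphism, concerns the \emph{tensor-product} algebra structure on $\Oq\ot\Uqf$ (that is what ``$\Oq$-comodule algebra'' means); the target of the lemma is the \emph{smash product} $\Oq\#\Uqf$, whose multiplication is twisted by $\mu_l$, and multiplicativity for the first structure does not transfer to anti-multiplicativity for the second. To see that something really goes wrong, test your map against the property that justifies the footnote: $\epsilon_l(v)$ should act on $\Oq$ (through the tautological action of $\Oq\#\Uqf$ on $\Oq$) as the right regular action $\mu_r(v)$, which is manifestly an anti-homomorphism. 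Unwinding the definitions, with the standard convention that the coaction recovers $ad_r$ via the pairing, this requirement for the bare $coad$ reduces to the identity $\sum w_3\,S(w_1)\,v\,w_2 = vw$ in $\Uq$ for all $w$. For cocommutative Hopf algebras this holds (permute the legs of $\triangle^{(2)}w$ and use the antipode axiom), which is why your classical picture of $X\mapsto X^R$ is correct; but in $\Uq(\mathfrak{sl}_2)$ with $v=K$, $w=E$ one computes $EK+(1-q^{-2})E$, whereas $vw=KE=q^{2}EK$. So in the quantum case the bare $coad$ does not give the right-invariant operators, and your sketch provides no other mechanism by which it would be an algebra map on $(\Uqf)^{op}$.

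This failure of cocommutativity is precisely what the paper's construction corrects for: there $\epsilon_l$ is obtained by first applying the antipode $S:(\Uqf)^{op}\to\U^{\fin}_{q,\smallleft}$ to land in the integrable part for the \emph{left} adjoint action, then applying $coad$ there, and finally untwisting by $1\ot S^{-1}$; the multiplicativity of the resulting map is then quoted from Montgomery's treatment of the Heisenberg double rather than re-proved. The parts of your argument surrounding the main step are fine and agree with what the paper uses implicitly: injectivity does follow from the counit axiom, and the image computation via $coad(\Uqf)=(\Oq\ot\Uqf)^{G_q}$ correctly identifies the underlying subspace of right $G_q$-invariants. But as written, the central claim --- that your map is an algebra homomorphism out of $(\Uqf)^{op}$ --- is unproved, and the antipode corrections you omit are not cosmetic.
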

\begin{proof} Let
$\U^{\fin}_{q,\smallleft}$ be the integrable part for the left
adjoint action. We have the algebra homomorphism $S: (\Uqf)^{op}
\to \U^{\fin}_{q,\smallleft}$. Let $\epsilon'_l$ be defined as the
composition
$$
\U^{\fin}_{q,\smallleft} \overset{coad}{\longrightarrow} \Oq \ot
\U^{\fin}_{q,\smallleft} \overset{1 \ot S^{-1}}{\longrightarrow}
\Oq \ot \U^{\fin}_{q}  {=} \Dq^{\fin}.
$$
Put $\epsilon_l = S \circ \epsilon'_l$. It follows from \cite{M93}
that $\epsilon_l$ is an algebra embedding.
\end{proof}
Observe that the $\epsilon_l$ does \emph{not} extend to an
embedding $\UqP \to \Dq$, unless $\lo = \g$.

\medskip

\noindent There is also the left adjoint action of $\Uq$ on $\Dq$.
This action commutes with the right adjoint action and is defined
by taking the action $\mu_l$ on $\Oq$ and the trivial $\Uq$-action
on itself. We denote this action by $ad_l$. It integrates to a
$G_q$-action. It restricts to a $G_q$-action on $\DqP$. We get the
embedding
$$\epsilon_r: \UqP \cong 1 \# \UqP
\hookrightarrow \DqP.$$ Thus $\epsilon_r(\UqP) = {}^{G_q}\DqP$,
equal the space of invariants for the left adjoint $G_q$-action.
Note that $\ZHC(\Uq(\lo)) \subseteq \UqP$ so that
$\epsilon_r(\ZHC(\Uq(\lo))) \subset \DqP$.

\subsection{Definition of quantum $\D$-modules on $\Po_q$}\label{Definition of quantum D-modules}
\subsubsection{} \begin{defi}\label{Dmoddef} Let $\DPG$ be the category whose object $M$ satisfies
\begin{enumerate}[(1)]
\item  $M$ is a left $\DqP$-module.
\end{enumerate}
 \begin{enumerate}[(2)]
 \item  $M$ has a right
$P_q$-action $\rho$ such that $\DqP \otimes M \to M$ is
$\Uq(\p)$-linear.
\end{enumerate}
 \begin{enumerate}[(3)]\item $d\rho(x)m = \epsilon_r(x)m$, for
 $x \in \Uq(\uo)$, $m \in M$.
\end{enumerate}
\end{defi}
The $\Uq(\p)$-linearity in $ii)$ means with respect to the action $d\rho$ on $M$ and the right $\Uq(\p)$-action on $\DqP \ot M$ given by
$(y \ot m) \cdot x = ad_r(x_2)(y) \ot d\rho(x_1)(m)$, for $y \ot m \in \DqP \ot M$ and $x \in \Uq(\p)$.
Morphisms are $\DqP$-linear and $P_q$-linear maps. We define the
global section functor $\Gamma$ on this category to be the functor
of taking $P_q$-invariants. Recall the $P_q$-universal Verma
module $\wMPq$ from Section \ref{Verma modules and universal Verma
modules}. Let
$$\DtP := \Oq \ot
\epsilon_r(\wMPq) \in \DPG.$$ Observe that $\DtP$ represents the
global section functor on this category.

\begin{rems}\label{explain it} A) It is enough to verify condition $(3)$
on a set of $\DqP$-module generators of $M$. The reason for this
is that $\Uq(\uo)$ is a left coideal. Indeed, if $m \in M$ satisfy
$d\rho(x)m = \epsilon_r(x)m$ for all $x \in \Uq(\uo)$, then for $y
\in \DqP$ we also have
$$d\rho(x)(ym) = ad_r(x_2)(y)d\rho(x_1)m =
ad_r(x_2)(y)\epsilon_r(x_1)m = \epsilon_r(x)(ym)
$$
since $x_1 \in \Uq(\uo)$. This implies that if $M$ satifies
$(1)$ and $(2)$ it has a maximal subobject $M' := \{m \in M; d\rho(x)m = \epsilon_r(x)m, x \in \Uq(\uo), m \in M \}$ and a maximal quotient
object $M^{''} := M/( \DqP \cdot \{ d\rho(x)m - \epsilon_r(x)m, x \in \Uq(\uo), m \in M \})$
that satisfy $(1)-(3)$.

\medskip

\noindent B) The principal reason why we work with $\Uq(\lo)$-integrable
differential operators, rather than $\Uq$-integrable ones, is the
existence of the parabolic triangular decomposition of Corollary
\ref{finite parabolic deco}, which is crucial to understand the
structure of $\wMPq$ and hence that of $\DtP$.

Thus, by Remark \ref{rems} C), we must in the case $\p = \bo$ use
the full algebra $\Dq$. Then the $\Uq(\lo)$-integrability
conditions are naturally imposed since we want Theorem
\ref{thmtag0} to hold.
\end{rems}
\subsubsection{Action of $\ZHC(\Uq(\lo))$ on $\DPG$.} Consider
now the smash product algebra $\DqP \# \Uq(\lo)^{\fin}$. Note that
any $M \in \DPG$ has a natural left $\DqP \#
\Uq(\lo)^{\fin}$-action (the $\DqP$-action is the given one and
the $\Uq(\lo)^{\fin}$-action is given by $d\rho
|_{\Uq(\lo)^{\fin}}$).

By Lemma \ref{ast untwist lemma} and Corollary \ref{ast untwist
cor} we have
\begin{proposition}\label{alphatilde prop}
There is an algebra homomorphism
$$
\wtalp: \Uq(\lo)^{\fin} \to \DqP \# \Uq(\lo)^{\fin}, \ \wtalp(u) =
S(u_1) \ot u_2, u \in \Uq(\lo)^{\fin}.
$$
$\Jm \wtalp$ commutes with $\Dq \ot 1$, $\wtalp$ induces an
algebra isomorphism $$1_{\DqP} \ot \wtalp: \DqP \ot
\Uq(\lo)^{\fin} \overset{\sim}{\to} \DqP \# \Uq(\lo)^{\fin}$$ and
$\wtalp$ restricts to an embedding
$$
\alp := \wtalp |_{\ZHC(\Uq(\lo))}: \ZHC(\Uq(\lo)) =
\ZHC(\Uq(\lo)^{\fin}) \to \Z(\DqP \# \Uq(\lo)^{\fin}).
$$
\end{proposition}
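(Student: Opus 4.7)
The plan is to recognize the statement as an instance of a general Hopf-algebraic ``untwisting'' construction: for any Hopf algebra $H$ and $H$-module algebra $A$, the map $h \mapsto S(h_1) \# h_2$ is an algebra embedding $H \to A \# H$ whose image commutes with $A \otimes 1$, and the composite $A \otimes H \to A \# H$, $a \otimes h \mapsto a \cdot (S(h_1) \# h_2) = aS(h_1) \# h_2$, is a linear isomorphism. This is presumably the content of Lemma \ref{ast untwist lemma} and Corollary \ref{ast untwist cor}: the multiplicativity follows from coassociativity and the anti-multiplicativity of $S$, the commutation comes from the antipode identity $S(h_1) h_2 = \epsilon(h) 1$, and the bijectivity is a triangularity argument along the coproduct filtration of $H$ (with leading term given by $\epsilon$).

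Once those general facts are in place, I would specialize to $(A,H) = (\DqP, \Uq(\lo)^{\fin})$ with the right adjoint action $ad_r$. The crucial input is that $\DqP$ is by definition the $\Uq(\lo)$-integrable part of $\Dq$, so $ad_r|_{\Uq(\lo)^{\fin}}$ makes $\DqP$ into a genuine right Hopf module algebra, and the smash product $\DqP \# \Uq(\lo)^{\fin}$ is well-defined. The appendix results then yield the first three assertions at once; I read ``$\Jm \wtalp$ commutes with $\Dq \otimes 1$'' as commutation with $\DqP \otimes 1$ inside the smash product, since $\DqP \subseteq \Dq$ and only $\DqP$-elements sit on the first tensor factor.

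For the last assertion about $\alp$, I would argue formally. First, $\ZHC(\Uq(\lo)) = \ZHC(\Uq(\lo)^{\fin})$ because every central element of $\Uq(\lo)$ is $ad_r$-fixed and hence automatically $\Uq(\lo)$-integrable. Given $z \in \ZHC(\Uq(\lo))$, the element $\wtalp(z)$ commutes with $\DqP \otimes 1$ by the second assertion, and commutes with $\Jm \wtalp$ because $\wtalp$ is an algebra homomorphism and $z$ is central in its source. Since $\DqP \otimes 1$ and $\Jm \wtalp$ together generate the whole smash product (by the isomorphism in the third assertion), $\wtalp(z)$ lies in $\Z(\DqP \# \Uq(\lo)^{\fin})$. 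Injectivity of $\alp$ is immediate from injectivity of $\wtalp$, which is a direct consequence of the third assertion.

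The essential content has been off-loaded to the appendix lemmas, so the main body of the argument is bookkeeping. The one place to exercise care is in matching the hypotheses of those lemmas with the quantum setup — specifically, verifying that $ad_r|_{\Uq(\lo)^{\fin}}$ makes $\DqP$ into a genuine right Hopf module algebra — but this is exactly why $\DqP$, rather than $\Dq$, is the right object to work with here, in line with Remark \ref{explain it}~B).
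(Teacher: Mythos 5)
Your proposal matches the paper's proof: the proposition is obtained there by specializing the appendix untwisting results (Lemma~\ref{ast untwist lemma} and Corollary~\ref{ast untwist cor}) to $R = \DqP$ and $H = \Uq(\lo)^{\fin}$, which is exactly your strategy, including the formal derivation of the centrality of $\wtalp(z)$ from the commutation and generation statements. Your readings of the two small ambiguities (the adjoint action making $\DqP$ a genuine $\Uq(\lo)^{\fin}$-module algebra, and ``$\Dq\ot 1$'' meaning ``$\DqP\ot 1$'') agree with the paper's intent.
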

Note that if $q$ is generic then $\Z(\DqP) = \C$ and $\alp$ is an
isomorphism.

\begin{defi}\footnote{We used a slightly different parametrization of the $\D$-module blocks in the enveloping algebra case,
compare \cite{BK10}, Proposition 4.4.} Let $\lambda \in T_\Lambda$ and let $\DPGl$ be
the category whose object $M$ satisfies $(1)-(3)$ and also
\begin{enumerate}[(4)]
 \item $(\alpha_{\lo}(z) - \chi_{\lo,\lambda}
 (z))m = 0, \; m \in M, z \in \ZHC(\Uq(\lo))$.
\end{enumerate}
Similarly, we let $\DPGwl$ be the category whose object $M$
satisfies $(1)-(3)$ and also
\begin{enumerate}[($\widehat{4}$)]
 \item  $\alpha_{\lo}(z) - \chi_{\lo,\lambda}
 (z)
\hbox{ is locally nilpotent on } M, \hbox{ for } z \in
\ZHC(\Uq(\lo)).$
\end{enumerate}
\end{defi}
\medskip

Again, the global section functor $\Gamma$ on $\DPGl$ is defined
to be the functor of taking $P_q$-invariants. Note that the object
$$\DPl := \Oq \ot
\epsilon_r({M}_{P_q,\lambda}) \in \DPGl$$ represents global
sections on this category.

\subsubsection{} Note that if $M \in \DPG$ and $V$ is a $P_q$-module such
that if we differentiate the $P_q$-action $\Uq(\uo) \subset
\Uq(\p)$ acts trivially on $V$, then we naturally have $M \ot V
\in \DPG$, by letting $P_q$ act diagonally and $\Dq$ on the first
factor. We get
\begin{Lem}\label{Lemacita} Let $\lambda \in T_\Lambda$, $M \in \DPGl$ and let
$V_{P_q}(\mu)$ be an irreducible $P_q$-module with highest weight
$\mu$. Then $M\ot  V_{P_q}(\mu)  \in \oplus_{ \nu \in
\Lambda(V_P(\mu))} \MOD(\Dq,P_q,\Uq(\uo), {\widehat{{
\lambda+\nu}}})$, where $\Lambda(V_{P_q}(\mu))$ denotes
the set of weights of $V_{P_q}(\mu)$.
\end{Lem}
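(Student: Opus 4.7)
The strategy is to pass through the central embedding $\alp$ of Proposition \ref{alphatilde prop}, use it to import the quantum Bernstein-Gelfand Lemma \ref{central character lemma}, and then split via the Chinese Remainder Theorem.

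First I would promote $\alp$ to the full algebra map $\wtalp$ and verify the following ``diagonality'' identity:
\begin{equation*}
\wtalp(u)(m \ot v) \;=\; \wtalp(u_{(1)}) m \;\ot\; u_{(2)} v, \qquad u \in \Uq(\lo)^{\fin},\; m \in M,\; v \in V_{P_q}(\mu),
\end{equation*}
where on the left $\wtalp(u)$ acts via the $\DqP \# \Uq(\lo)^{\fin}$-module structure on $M \ot V_{P_q}(\mu)$ ($\DqP$ on the first factor, $\Uq(\lo)^{\fin}$ diagonal via $d\rho$ on $M$ and via the restricted $P_q$-action on $V_{P_q}(\mu)$). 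This is a short computation from $\wtalp(u) = S(u_{(1)}) \# u_{(2)}$ together with coassociativity $(\Delta \ot 1)\Delta = (1 \ot \Delta)\Delta$; the key cancellation is that the element $S(u_{(1)(1)}) \# u_{(1)(2)}$ in $\DqP \# \Uq(\lo)^{\fin}$ is precisely $\wtalp(u_{(1)})$.

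Next, the assumption $M \in \DPGl$ says that $\alpha_{\lo}(z)$ acts on $M$ as the scalar $\chi_{\lo,\lambda}(z)$ for $z \in \ZHC(\Uq(\lo))$, i.e.\ the $\wtalp$-action gives $M$ the structure of a $\Uq(\lo)^{\fin}$-module with central character $\chi_{\lo,\lambda}$. Combined with the diagonality identity, $M \ot V_{P_q}(\mu)$ is, as a $\Uq(\lo)^{\fin}$-module via $\wtalp$, nothing but the ordinary Hopf-algebraic tensor product of $M$ and $V_{P_q}(\mu)$. The proof of Lemma \ref{central character lemma} carries over verbatim to $\Uq(\lo)^{\fin}$, since it uses only the quantum Duflo formula and a Bernstein-Gelfand type filtration argument, both of which hold for the reductive Lie algebra $\lo$. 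Applying this analog yields that the ideal
\begin{equation*}
I \;:=\; \prod_{\nu \in \Lambda(V_{P_q}(\mu))} \Ker \chi_{\lo,\lambda + \nu} \;\subset\; \ZHC(\Uq(\lo))
\end{equation*}
annihilates $M \ot V_{P_q}(\mu)$ via $\alpha_{\lo}$.

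Finally, since $\alpha_{\lo}(\ZHC(\Uq(\lo))) \subset \Z(\DqP \# \Uq(\lo)^{\fin})$, its action on $M \ot V_{P_q}(\mu)$ commutes with $\DqP$ and with the diagonal $\Uq(\p)^{\fin}$-action (for the $\Uq(\uo)$-part one uses condition $(3)$ identifying $d\rho$ with $\epsilon_r$, which is already absorbed into $\DqP$); hence the generalized eigenspaces of $\alpha_{\lo}(\ZHC(\Uq(\lo)))$ are subobjects in $\DPG$. Because the maximal ideals $\Ker \chi_{\lo,\lambda+\nu}$ associated to distinct central characters are pairwise coprime, the Chinese Remainder Theorem applied to the annihilation by $I$ produces the direct sum decomposition of $M \ot V_{P_q}(\mu)$ indexed by $\nu \in \Lambda(V_{P_q}(\mu))$, with each summand satisfying $(\widehat{4})$ for $\chi_{\lo,\lambda+\nu}$, i.e.\ lying in $(\DqP,P_q,\Uq(\uo),\widehat{\chi_{\lo,\lambda+\nu}})\text{-mod}$. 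The main obstacle is the diagonality identity together with the accompanying verification that the proof of Lemma \ref{central character lemma} really does transpose to the $\wtalp$-action on $\Uq(\lo)^{\fin}$-modules; once these are in place the rest is formal.
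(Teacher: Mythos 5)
Your proposal is correct and follows essentially the same route as the paper: identify the $\wtalp$-action on $M \ot V_{P_q}(\mu)$ as the diagonal action, use that $M$ has central character $\chi_{\lo,\lambda}$, and apply the Levi-algebra version of Lemma \ref{central character lemma}, with the direct sum decomposition then coming from the splitting into generalized eigenspaces for the central ideal. The paper leaves the diagonality identity and the final Chinese-Remainder splitting implicit, which you spell out.
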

\begin{proof}
We know that any object of $\DPG$ has a natural action of $\DqP \#
\Uq(\lo)^{\fin}$ and hence also of $\Uq(\lo)^{\fin}$ via the map
$\wtalp$ of Proposition \ref{alphatilde prop}. Let us refer to
this $\Uq(\lo)^{\fin}$-action as the $\wtalp$-action.

We observe that the $\wtalp$-action on $M \ot V_{P_q}(\mu)$ is the
diagonal action of the $\wtalp$-action on $M$ and the given
$\Uq(\lo)^{\fin}$-action on $V_{P_q}(\mu)$, which is obtained by
differentiating the given $L_q$-action. We have by assumption that
$\alpha_{\lo}(z) = \wt{\alpha}_{\lo}(z)$ acts by
$\chi_{\lo,\lambda}(z)$ on $M$, for $z \in \ZHC(\Uq(\lo))$.
Therefore the assertion of the lemma follows from Lemma
\ref{central character lemma}.
\end{proof}
\subsection{Direct and inverse image of $\D_q$-modules}\label{Dir Im}
\subsubsection{} For $P \subseteq Q$ parabolics recall that we have the adjoint pair
$(\pQPus, \pQPls)$ at the level of equivariant $\Oq$-modules (see
Section \ref{global sections Section}). We shall now construct
direct and inverse image functors between our $\Dq$-module
categories.

Let $L'$ and $R'$ be the Levi and the unipotent radical of $Q$ and
let $\lo'$ and $\uo'$ be their respective Lie algebras. Let $\qo$
be the Lie algebra of $Q$.

We shall construct a direct image functor
$$
\pQPlb: \DPG \to \DQG
$$
Let $M \in \DPG$. We define the underlying $(\Oq,Q_q)$-module of
$\pQPlb M$ to be $\pQPls M = \IndPQ M $. It remains to construct
an action of $\UqQ$ on $\IndPQ M$ satisfying certain
compatibilities.

Let for now $\triangle: \Uqf \to \Oq(Q) \ot \Uqf$ denote the left
coadjoint action of $\Oq(Q)$ on $\Uqf$ (i.e. the coaction obtained
by integrating the action $ad_r$ of $\Uq(\qo)$ on $\Uqf$). Since $
\triangle$ makes $\Uqf$ an $\Oq(Q)$-comodule algebra we have that
$ \triangle$ is an algebra homomorphism. By the tensor identity $
\triangle$ maps $\Uqf$ isomorphically onto the subspace $\IndPQ
\Uqf \subset \Oq(Q) \ot \Uqf$.

Moreover, the tensor identity provides an isomorphism
$$\Uqf \ot
\IndPQ M \to \IndPQ \Uqf \ot M$$ given by $u \ot (a \ot m) \mapsto
u_1 a  \ot u_2 \ot m$, for $a \ot m \in \IndPQ M$ and $u \in
\Uqf$, $ \triangle u = u_1 \ot u_2$. Composing this isomorphism
with the map $\IndPQ \Uqf \ot M \to Ind^{Q_q}_{P_q} M$, that is
given by the action map $\Uqf \ot M \to M$, we get a map
\begin{equation}\label{Uqf-structure on Ind}
\Uqf \ot \IndPQ M \to \IndPQ  M, \ u \ot (a\ot m) \mapsto u_1a \ot
u_2m.
\end{equation}
This gives a $\Uqf$-module structure on $\IndPQ M$, since $
\triangle$ is an algebra map.
\begin{proposition} $\pQPlb$ defines a functor $\DPG \to \DQG$.
\end{proposition}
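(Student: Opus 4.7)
The plan is to verify that $\pQPlb M$, with underlying $(\Oq, Q_q)$-module given by $\pQPls M$ and $\Uqf$-action from \eqref{Uqf-structure on Ind}, satisfies the three axioms of $\DQG$, and to check functoriality. The $\Oq$-action and right $Q_q$-action on $\pQPlb M$ are inherited from $\pQPls M$, the latter obtained from the left $Q_q$-action on $\Oq(Q)$ via the antipode.

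The first and main task is to promote the $\Uqf$-action from \eqref{Uqf-structure on Ind} to a $\UqQ$-action and then show it combines with the $\Oq$-action into a $\DqQ = \Oq \# \UqQ$-module structure. The key observation is that $\UqQ \subseteq \UqP$, since $\lo \subseteq \lo'$ makes $\Uq(\lo')$-integrability stronger than $\Uq(\lo)$-integrability. Using the left coideal property of $\UqQ$ inside $\Uq$, one extends the adjoint coaction to a map $\triangle : \UqQ \to \Oq(Q) \ot \UqP$, so that the formula $u \cdot (a \ot m) = u_1 a \ot u_2 m$ still makes sense for $u \in \UqQ$ because $u_2 \in \UqP$ acts on $m \in M$ through the given $\DqP$-structure. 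The smash-product relation then follows by a direct computation using the comodule-algebra property of $\triangle$ together with the formula $\alpha \cdot (a \ot m) = \alpha_1 a \ot \alpha_2 m$ defining the $\Oq$-action on $\IndPQ M$.

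Next I would verify axioms (2) and (3) of Definition \ref{Dmoddef} with $Q$ in place of $P$. Axiom (2), that $\DqQ \ot \pQPlb M \to \pQPlb M$ is $\Uq(\qo)$-linear with respect to the right $Q_q$-action, follows from the $Q_q$-equivariance of $\triangle$, which is built by integrating the right adjoint $ad_r$-action. For axiom (3), note that $\uo' \subseteq \uo$ since $P \subseteq Q$, so $M \in \DPG$ already satisfies $d\rho(x) = \epsilon_r(x)$ for $x \in \Uq(\uo')$; I would show this condition transports to $\IndPQ M$ by combining the tensor identity \eqref{tensor identity} with the compatibility of $\epsilon_r$ across the parabolic inclusion.

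Functoriality is immediate: given $f : M \to M'$ in $\DPG$, the map $1 \ot f$ restricts to an $\Oq$-linear, $P_q$-equivariant map $\IndPQ M \to \IndPQ M'$ that commutes with the $\UqQ$-action and the right $Q_q$-action by inspection of the formulas. The main obstacle will be the first step: rigorously extending the coaction from $\Uqf$ to $\UqQ$ and checking that the resulting $\UqQ$-action satisfies the smash-product relations with the $\Oq$-action. Since $\UqQ$ need not itself be $\Uq(\qo)$-integrable, this extension relies on the delicate interplay between the left coideal property of $\UqQ$ in $\Uq$ and the $\lo$-integrability of $\UqP$, which together guarantee that $u_2$ lands in $\UqP$ whenever $u \in \UqQ$.
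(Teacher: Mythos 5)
There is a genuine gap at exactly the point you flag as ``the main obstacle,'' and it is not a technicality one can defer. The formula $u\cdot(a\ot m)=u_1a\ot u_2m$ on $\IndPQ M$ requires the \emph{coadjoint} coaction $\UqQ \to \Oq(Q)\ot \UqQ$, i.e.\ the integration of $ad_r(\Uq(\qo))$ to a $Q_q$-module structure on $\UqQ$ — not the Hopf comultiplication. The left coideal property of $\UqQ$ in $\Uq$ concerns the comultiplication $\triangle$ and gives you $\triangle\, \UqQ\subset \UqQ\ot\Uq$; it says nothing about integrating the right adjoint action of $\Uq(\qo)$, and the two maps are different. Since $\UqQ$ is by definition only $\lo'$-integrable, the coaction you need with values in $\Oq(Q)\ot(\text{anything})$ simply does not exist for general elements of $\UqQ$; the coadjoint coaction into $\Oq(Q)$ is only available on $\Uqf$. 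So your construction produces a $\Uqf$-module structure on $\IndPQ M$ (this part is correct and agrees with the paper) but does not extend it to the $\UqQ$-action required for an object of $\DQG$.

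The paper's proof resolves this with an idea absent from your proposal: Lemma \ref{induction L verses P} gives a $Q_q$-linear isomorphism $\IndPQ(M)\cong Ind^{L'_q}_{(L'\cap P)_q}(M)$, replacing parabolic induction by induction along the Levi $L'$. One then transports the $\Oq$-action across this isomorphism and applies the tensor identity for $L'_q$-induction, where the relevant coaction $\UqQ\to\Oq(L')\ot\UqQ$ \emph{does} exist precisely because $\UqQ$ is $\lo'$-integrable by definition. This is the whole point of working with $\Uq(\lo)$-integrable (rather than $\Uq$-integrable) differential operators, as the paper remarks elsewhere. Your verification of axioms (2) and (3) and of functoriality is reasonable in outline, but without the passage to the Levi the $\UqQ$-action that those axioms are supposed to constrain has not been constructed.
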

\begin{proof} Let $M \in \DPG$. The $\Uqf$-action on $\pQPlb M$ is compatible with the $\Oq$-action so
that $\IndPQ M$ becomes a $\Dqf$-module. Moreover, this
$\Dqf$-action is $Q_q$-equivariant. In order to make $\pQPlb M$ a
$\DqQ$-module we shall use that \ref{general equality} provides us
with a $Q_q$-linear isomorphism:
$$
\pQPlb M \cong Ind^{L'_q}_{(P \cap L')_q} M.
$$
We transport the $\Oq$-action to the RHS making $Ind^{L'_q}_{(P
\cap L')_q} M$ an object of $\OQG$. In analogy with the above we
can equip $Ind^{L'_q}_{(P \cap L')_q} M$ with an $\UqQ$-structure
by the composition
$$
\UqQ \ot Ind^{L'_q}_{(P \cap L')_q} M \cong  Ind^{L'_q}_{(P \cap
L')_q} \UqQ \ot M \to Ind^{L'_q}_{(P \cap L')_q} M
$$
where the first isomorphism is the tensor identity and the second
map is induced from the action map $\UqQ \ot M \to M$. (This
extends the $\Uqf$-action previously defined.) Again, this
$\UqQ$-action is compatible with the $\Oq$ and $Q_q$-actions
making $\pQPlb M \cong Ind^{L'_q}_{(P \cap L')_q} M$ an object of
$\DQG$.
\end{proof}
Define a functor
\begin{equation}\label{inverse image of D-modules}
{\pQPub}: \DQG \to \DPG
\end{equation}
by ${\pQPub}(V) :=$ maximal $(\Dq, P_q)$-module quotient of $V$ on
which the two actions of $\Uq(\uo)$ coincide. The $P_q$-action on
${\pQPub}(V)$ is by definition the restriction of the
$Q_q$-action.

We observe that the forgetful functor $for: \DPG \to \MOD(\DqP)$ has
a right adjoint. From this it follows that $\DPG$ has enough
injectives. It is straightforward to verify that

\begin{proposition} The functor ${\pQPub}$ is right exact.
There is an adjoint pair of functors $$\pQPub: \DQG
\rightleftarrows \DPG: {\pQPlb}.$$
\end{proposition}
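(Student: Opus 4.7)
The plan is to establish the adjunction first, after which right exactness comes for free, since any left adjoint between abelian categories preserves cokernels.

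I would bootstrap from the adjoint pair $(\pQPus, \pQPls)$ at the level of equivariant $\Oq$-modules from \ref{inverse direct def}. Given $V \in \DQG$ and $M \in \DPG$, the task is to construct a natural bijection
\[
\Hom_{\DPG}(\pQPub V, M) \cong \Hom_{\DQG}(V, \pQPlb M).
\]
In the forward direction, a morphism $\phi: V \to \pQPlb M$ in $\DQG$ produces, via the $\Oq$-level adjunction, a $P_q$-linear $\Oq$-map $\widetilde{\phi}: \pQPus V \to M$. Two checks are then required: (i) $\widetilde{\phi}$ respects the appropriate actions of $\DqP$, and (ii) $\widetilde{\phi}$ factors through the maximal quotient of $V$ on which the two $\Uq(\uo)$-actions agree, i.e.\ through $\pQPub V$. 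For (i), $\DqQ$-linearity of $\phi$ yields $\DqQ$-linearity of $\widetilde{\phi}$, while the extra generators of $\DqP$ beyond $\DqQ$ match up through the $P_q$-equivariance via $\epsilon_r$ and $d\rho$. For (ii), condition~(3) of Definition~\ref{Dmoddef} on $M$ forces $\widetilde{\phi}$ to vanish on elements of the form $d\rho(x)v - \epsilon_r(x)v$ for $x \in \Uq(\uo),\, v \in V$, so it descends to $\pQPub V$. The reverse direction is symmetric: given $\psi: \pQPub V \to M$ in $\DPG$, precompose with the canonical surjection $V \twoheadrightarrow \pQPub V$, apply the $\Oq$-adjunction, and verify the structural compatibilities transfer back.

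Once the adjunction is in hand, $\pQPub$ is right exact automatically as a left adjoint.

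The main obstacle will be the compatibility check (i). The algebras sit as $\DqQ \subseteq \DqP \subseteq \Dq$, and the $\UqP$-action on $M$ is repackaged into the $\UqQ$-action on $\pQPlb M = \IndPQ M$ through the formula \ref{Uqf-structure on Ind}, built on the tensor identity \ref{tensor identity} and the coadjoint coaction of $\Oq(Q)$ on $\Uqf$. Unwinding these definitions to confirm that they dovetail across the $\Oq$-adjunction is essentially bookkeeping, but it is the step that ties the argument together and depends crucially on the integrability properties from Section~\ref{loc fin section}, particularly the fact that $\UqP$ is a left coideal so that Remark~A following Definition~\ref{Dmoddef} applies and it is enough to check the compatibility on a generating set.
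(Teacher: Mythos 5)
Your outline is correct and coincides with what the paper intends: the paper offers no written proof (it only asserts the statement is ``straightforward to verify''), and your argument---lifting the adjunction $(\pQPus,\pQPls)$ from $(\Oq,P_q)$-modules, using condition (3) of Definition \ref{Dmoddef} on the target to factor the adjoint map through the maximal quotient defining $\pQPub$, checking the $\DqP$/$\DqQ$-compatibilities via the tensor-identity construction \ref{Uqf-structure on Ind} and the coideal property of $\Uq(\uo)$, and then deducing right exactness from $\pQPub$ being a left adjoint---is exactly the standard verification being alluded to. No gaps beyond the bookkeeping you already flag.
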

Note that the forgetful functors $$\DPG {\longrightarrow} \OPG
{\longrightarrow} \MOD(P_q)$$ map injectives to injectives. Thus the
derived functors $R{\pQPlb}$ can be computed as $R\IndPQ$ of
underlying $P_q$-modules.

\subsection{Description of global sections}
\subsubsection{} We keep the notations of Section \ref{Dir Im}. The main result of
this section is
\begin{Thm}\label{thmtag0}  For any $q \in \C^*$ except roots of unity of order smaller than
the Coxeter number of $\g$ we have $i)$ $R{{\pPBlb}}\DtB = \DtP
\ot _{ \C\Lambda^{\tW_P}} \C\Lambda$, $ii)$  $R{\pQPlb}(\DtP) =
\DtQ \ot _{\C\Lambda^{\tW_Q}} \C\Lambda^{\tW_P}$, $iii)$
$R\Gamma(\DtP) = \Uq^{\fin}
\ot_{\C\Lambda^{\tW}}\C\Lambda^{\tW_P}$ and $iv)$ $R\Gamma(\DtP)
\cong \tU$.
\end{Thm}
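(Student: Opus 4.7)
The strategy is to bootstrap from the case $P = B$ established in \cite{BK06}, using the composition of derived direct images $R\Gamma \cong R\pGPlb \circ R\pPBlb$ (where the outer $R\Gamma$ is taken on $(\Dq, B_q)$-modules) together with a reduction of the relative push-forward to the reductive Levi $L$ of $P$.

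To prove \textbf{(i)}, the tensor identity \eqref{tensor identity} and the description in Section \ref{Dir Im} of $R\pPBlb$ on underlying equivariant $\Oq$-modules give
\begin{equation*}
R\pPBlb(\DtB) \;\cong\; \Oq \otimes R\IndBP(\wMBq).
\end{equation*}
By Lemma \ref{induction L verses P} this equals $\Oq \otimes R\IndBL(\wMBq)$. Now I would apply the Borel case of the theorem \emph{inside} the reductive group $L$ (with Borel $L \cap B$ and $\tW_L = \tW_P$), which is precisely the content of \cite{BK06}. The parabolic triangular decomposition realizes $\wMBq$, restricted to $(L\cap B)_q$, as $\Uq(\overline{\uo})$ tensored with the Borel universal Verma for $L \cap B$, with $\Uq(\uo)$ acting trivially on the first factor. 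Induction from $(L\cap B)_q$ to $L_q$ then yields
\begin{equation*}
R\IndBL(\wMBq) \;\cong\; \wMPq \otimes_{\C\Lambda^{\tW_P}} \C\Lambda,
\end{equation*}
and tensoring with $\Oq$ establishes (i).

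Part \textbf{(ii)} follows by exactly the same argument with $B \subset P$ replaced by $P \subset Q$: Lemma \ref{induction L verses P} factors $\IndPQ$ through the Levi $L'$ of $Q$, and an application of (i) inside $L'$ (with the parabolic $L \cap L' \subset L'$) shifts the inner invariant ring from $\C\Lambda$ to $\C\Lambda^{\tW_P}$ and the outer from $\C\Lambda^{\tW_P}$ to $\C\Lambda^{\tW_Q}$.

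For \textbf{(iii)}, apply $R\Gamma$ on $(\Dq, P_q)$-modules to both sides of (i) and use the composition of derived direct images. Since $\C\Lambda$ is a free module over $\C\Lambda^{\tW_P}$ (Chevalley-type finiteness for the parabolic Weyl group action on the torus), base change by $\C\Lambda$ over $\C\Lambda^{\tW_P}$ is exact and commutes with $R\Gamma$, yielding
\begin{equation*}
R\Gamma(\DtB) \;\cong\; R\Gamma(\DtP) \otimes_{\C\Lambda^{\tW_P}} \C\Lambda.
\end{equation*}
Combined with the $P = B$ identity $R\Gamma(\DtB) = \Uqf \otimes_{\C\Lambda^{\tW}} \C\Lambda$ from \cite{BK06} and faithful flatness, one descends to (iii). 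Finally \textbf{(iv)} identifies $\Uqf \otimes_{\C\Lambda^{\tW}} \C\Lambda^{\tW_P}$ with $\tU$ via the quantum Harish-Chandra isomorphism of Section \ref{The center of Uq and the Harish-Chandra homomorphism} together with Lemma \ref{Its a big subalgebra!}(ii).

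The hard part is the Levi reduction at the heart of (i): establishing the relative Kempf-type vanishing $R^{>0}\IndBL(\wMBq) = 0$ and identifying the non-derived induction precisely with $\wMPq \otimes_{\C\Lambda^{\tW_P}} \C\Lambda$, not just up to twist. Because the adjoint action of $\Uq$ on itself does not integrate to a $G_q$-action (see Section \ref{loc fin section}), careful bookkeeping of the $\lo$-integrable subalgebras and of the parabolic Harish-Chandra center $\C\Lambda^{\tW_P}$ is required; once this is set up, the remaining arguments proceed along classical lines.
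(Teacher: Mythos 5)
Your treatment of parts (i), (iii) and (iv) follows the paper's own route: reduce $R\pPBlb\DtB$ to $\Oq\ot R\IndBL\wMBq$ via the tensor identity and Lemma \ref{induction L verses P}, split off the $\Uq(\uobar)$ factor using the parabolic triangular decomposition of Corollary \ref{finite parabolic deco}, quote the Borel case for the reductive group $L$, and then obtain (iii) by applying $R\Gamma$ and descending along the faithfully flat extension $\C\Lambda^{\tW_P}\subset\C\Lambda$. Two caveats: for the statement at all $q$ (not only generic $q$) the Borel-case input is \cite{BK11}, not \cite{BK06}; and, as you flag yourself, one must show that the \emph{canonical adjunction map} $\DtP\ot_{\C\Lambda^{\tW_P}}\C\Lambda\to R\pPBlb\DtB$ (which the paper constructs explicitly in Step a) before comparing with the triangular decomposition) is an isomorphism, not merely that the two sides are abstractly isomorphic.

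The genuine gap is in part (ii). You assert that it follows ``by exactly the same argument with $B\subset P$ replaced by $P\subset Q$''. It does not: the argument for (i) hinges on the decomposition $\Uq(\uobar)\ot\Uq(\lo)^{\fin}\ot\Uq(\uo)\isoto\UqP$, and the analogous $\p$-parabolic triangular decomposition of $\UqQ$ does not exist --- this is precisely the content of Remark \ref{rems} C), which says Corollary \ref{finite parabolic deco} is optimal. Consequently one cannot split $\wMPq$, as a $(P\cap L')_q$-module, into $\Uq(\overline{\uo'})$ tensored with the corresponding universal Verma module for the Levi of $Q$ and then quote the theorem inside $L'$. The paper proves (ii) by a different mechanism: compose $R\IndBQ=R\IndPQ\circ R\IndBP$, substitute the answer from (i) for $R\IndBP\wMBq$, compare with the known value of $R\IndBQ\wMBq$, and cancel the factor $\C\Lambda$ using faithful flatness of $\C\Lambda$ over $\C\Lambda^{\tW_P}$ --- the same descent you invoke for (iii), applied one level earlier. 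Since your (iii) does not use your (ii), it survives; but (ii) as written must be replaced by this descent argument.
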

Part $iv)$ of this theorem, for the case $P=B$, was first proved
in \cite{BK06,BK08}. A new proof of $iii)$ and $iv)$ in the case
$P=B$ is given in \cite{BK11}. The strategy here is to reduce to that
case. Because of our usual equivalence $\MOD(\C) \cong
\MOD(\Oq,G_q)$ we see that a special case of $iii)$ implies that
$i)$ holds when $P=G$.
\begin{proof}[Proof of Theorem \ref{thmtag0}.]
\noindent \emph{Step a)} We have ${\pPBub} \DtP = \DtB$. By
adjunction this gives a morphism
$$
\phi_{B_q,P_q}: \DtP \to R{{\pPBlb}}\DtB.
$$
The embedding $\C\Lambda \to \wMBq$ induces a map $\alpha:
\C\Lambda \to R{{\pPBlb}}\DtB $. We observe that $\alpha
|_{\C\Lambda^{\tW_P}}$ coincides with the composition
$$\C\Lambda^{\tW_P} \overset{\chi^{-1}_{\lo,
\lambda}}\longrightarrow \ZHC(\Uq(\lo)) \hookrightarrow \UqP
\overset{\epsilon_r}{\to} \DqP \to \DtP
\overset{\phi_{B_q,P_q}}{\to} R{{\pPBlb}}\DtB.$$ We thus get a map
$$
\phi_{B_q,P_q} \ot \alpha: \DtP \ot_{{\C\Lambda}^{\tW_P}}
\C\Lambda \to R{{\pPBlb}}\DtB.
$$
By the tensor identity and lemma \ref{general equality} we have
$${R\pPBlb} \DtB \cong \Oq(P) \ot R\IndBP \wMBq \cong \Oq(P) \ot R\IndBL \wMBq.$$ Therefore, $\phi_{B_q,P_q} = 1_{\Oq(P)} \ot \overline{\phi}_{B_q,P_q}$
where
$$
\overline{\phi}_{B_q,P_q}: \wMPq \to R\IndBL \wMBq
$$
is given by $\overline{\phi}_{B_q,P_q}(m) = a_1 \ot
\overline{m}_2$, where $\Delta m = a_1 \ot m_2 \in \Oq(L) \ot
\wMPq$ is the coaction and $\overline{m}_2$ is the image of $m_2$
in $\wMBq$. There is also a natural map $\overline{\alpha}:
\C\Lambda^{\tW_P} \to \wMPq$ (obtained by restricting the image of
$\alpha$). Thus, again, we get a map
\begin{equation}\label{ind morphism nr 2c}
\overline{\phi}_{B_q,P_q} \ot \overline{\alpha}: \wMPq
\ot_{{\C\Lambda}^{\tW_P}} \C\Lambda  \to  R\IndBL \wMBq.
\end{equation}
Recall that by \cite{BK11} $\overline{\phi}_{B_q,G_q} \ot
\overline{\alpha}$ is an isomorphisms for all $q$ (except roots of
unity of order smaller than the Coxeter number of $\g$).

\medskip

\noindent \emph{Step b)} By corollary \ref{finite parabolic deco}
we have the isomorphisms $(*) \ \Uq(\uobar) \ot {\wt M}_{L_q}
\overset{\sim}{\to} \wMPq$ and $(**) \ \Uq(\uobar) \ot {\wt M}_{(B
\cap L)_q} \overset{\sim}{\to} \wMBq$, where ${\wt M}_{(B \cap
L)_q} := \Uq(\lo) \ot_{\Uq(\lo \cap \n)} \C$ is the universal
Verma module for $\Uq(\lo)$.
%

Under the isomorphisms $(*)$ and $(**)$ we see that the map
$\overline{\phi}_{B_q,P_q} \ot \overline{\alpha}$ corresponds to
$$
f: \Uq(\uobar) \ot \Uq(\lo)\ot_{{\C\Lambda}^{\tW_P}} \C\Lambda \to
R\IndBL \Uq(\uobar) \ot {\wt M}_{(B \cap L)_q},
$$
where $f(x \ot v \ot z) = x_1 v_1 \ot x_2 \ot \overline{v_2}\cdot
\overline{\alpha}(z)$, where $v \mapsto \overline{v}$ is the
natural projection $\Uq(\lo)^\fin \to {\wt M}_{(B \cap L)_q}$.

Again, (by \cite{BK11}) the map $ \Uq(\lo)^\fin
\ot_{{\C\Lambda}^{\tW_P}} \C\Lambda \to R\IndBL {\wt M}_{(B \cap
L)_q}$ given by $v \ot z \mapsto v_1 \ot \overline{v_2}\cdot
\overline{\alpha}(z)$ is an isomorphism. Thus also $f$ is an
isomorphism by the tensor identity. Thus $\phi_{B_q,P_q} \ot
\overline{\alpha}$ is an isomorphism. This proves $i)$.

\medskip

\noindent \emph{Step c)} Just like in \emph{Step a)} we get a map
\begin{equation}\label{ind morphism nr 3}
\overline{\phi}_{P_q,Q_q}: \wMQq \ot_{{\C\Lambda}^{\tW_Q}}
\C\Lambda^{\tW_P}\to R\IndPQ \wMPq
\end{equation}
that we must prove is an isomorphism. We have $R\IndBP \wMBq =
\wMPq \ot_{{\C\Lambda}^{\tW_P}} \C\Lambda$, so that
$$
(R\IndPQ \wMPq) \ot_{{\C\Lambda}^{\tW_P}} \C\Lambda = R\IndPQ
\wMPq \ot_{{\C\Lambda}^{\tW_P}} \C\Lambda = R\IndBQ \wMBq = \wMQq
\ot_{{\C\Lambda}^{\tW_Q}} \C\Lambda.
$$
Since $\C \Lambda$ is faithfully flat over $\C\Lambda^{\tW_P}$ it
follows that $\overline{\phi}_{P_q,Q_q}$ and hence
$\phi_{P_q,Q_q}$ is an isomorphism. This shows $ii)$. $iii)$
follows from a special case of $ii)$ by taking $G_q$-invariants.
Finally, $iv)$ is deduced from $iii)$ by specializing to
$\lambda$.
\end{proof}

\subsubsection{Localization functor.} Because of Theorem
\ref{thmtag0} the global section functor $\Gamma$ takes values in
certain categories of $\Uq$-modules:
$$\Gamma: \DPGl \to \MOD(\Uql) \hbox{ and }$$
$$\Gamma: \DPGwl \to
\MOD_{\widehat{\lambda}}(\Uq).$$ It is easy to see that both functors
have left adjoints, denoted by $\Loc$, which we call localization
functors. In the first case it is given by
$$
\Loc = \DPl \otimes_{\Uql} ( \ ): \MOD(\Uql) \to \DPGl
$$
and in the second case it is given by
$$
\Loc = \underleftarrow{\lim}_n (\DtP/(1 \ot \Ker \chi_\lambda)^n)
\otimes_{\Uq} ( \ ): \MOD_{\widehat{\lambda}}(\Uq) \to \DPGwl.
$$

\section{Singular Localization}
\subsubsection{} Here we prove the singular version of Beilinson-Bernstein
localization.
\begin{theorem}\label{singlocthmtag0} Let $q$ be generic and let $\lambda$ be dominant and $P$-regular. Then $$\Gamma: \DPGl \to
\MOD(\Uql)$$
is an equivalence of categories. \end{theorem}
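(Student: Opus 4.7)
I would follow the strategy of \cite{BB81} as adapted in \cite{BK10} to the singular/parabolic setting, proving that $\Gamma$ is both exact and conservative (sends non-zero objects to non-zero objects), and then deducing the equivalence of categories from the adjunction with $\Loc = \DPl \otimes_{\Uql} (-)$ together with the computation $\Gamma(\DPl) = \Uql$ from Theorem \ref{thmtag0}.

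For exactness of $\Gamma$, since $\Gamma$ is a right adjoint and hence left exact, it suffices to show that $R^1\Gamma(M) = 0$ for every $M \in \DPGl$; by a direct-limit reduction one may assume $M$ is $\Oq$-coherent. I would tensor $M$ with an appropriate finite-dimensional $G_q$-module $V$ such that, after filtering $V$ as a $P_q$-module by weight lines $\C_\psi$ and invoking Proposition \ref{aff}(1) and (5), one obtains $R^{>0}\Gamma(M \otimes V) = 0$. By Lemma \ref{Lemacita}, the $\D$-module $M \otimes V$ splits as a direct sum of objects with generalized Levi central characters $\chi_{\lo, \lambda + \psi}$ for $\psi \in \Lambda(V)$. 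By Lemma \ref{Pregularweightlemma}(ii) these characters are pairwise distinct: $P$-regularity and dominance of $\lambda$ forces $\chi_{\lambda + \psi}$ to be distinct, and equality of Levi-level characters would imply equality of $\g$-level characters via the inclusion $\tW_P \subset \tW$. Hence the summand of $M \otimes V$ with Levi central character $\chi_{\lo, \lambda + \mu}$ (for $\mu$ the highest weight of $V$) is $\Gamma$-acyclic. Composing with the opposite translation -- tensoring with $V^*$ and projecting onto the $\chi_{\lo, \lambda}$-summand, which is a translation functor in the sense of Zuckerman-Jantzen -- one recovers $M$ as a direct summand, transferring the vanishing to $R^1\Gamma(M) = 0$.

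For conservativity, if $\Gamma(M) = 0$, the same procedure combined with the now-proved exactness shows that the global sections of the $\chi_{\lo, \lambda+\mu}$-summand of $M \otimes V$ vanish; the ampleness results of Proposition \ref{aff}(3) and (4) then force this summand to be zero, and running the translation in reverse gives $M = 0$. Putting everything together with the identification $\Gamma(\DPl) = \Uql$ from Theorem \ref{thmtag0}, the unit $N \to \Gamma\Loc N$ and counit $\Loc\Gamma M \to M$ of the adjunction are isomorphisms (the counit being surjective by ampleness and an isomorphism by exactness plus conservativity of $\Gamma$, and the unit following from the $\Uql$-module identification of $\Gamma(\DPl)$).

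The principal technical obstacle is the exactness step: one has to verify that the central-character projection used in the translation functor is compatible with the parabolic $\D$-module structure of Definition \ref{Dmoddef}, and that the separation of characters provided by Lemma \ref{Pregularweightlemma}(ii) at the level of $\ZHC(\Uq(\g))$ really does separate the Levi-level characters appearing in Lemma \ref{Lemacita}. The $P$-regularity hypothesis on $\lambda$ is precisely what enables this separation and enters the argument essentially, exactly as in the enveloping-algebra parabolic case of \cite{BK10}.
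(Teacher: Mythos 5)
Your overall architecture is the paper's: reduce to (a) exactness and (b) conservativity of $\Gamma$, combine with $\Gamma(\DPl)=\Uql$ being a generator, and obtain both via tensoring with finite-dimensional $G_q$-modules, the block decomposition of Lemma \ref{Lemacita}, the separation Lemma \ref{Pregularweightlemma}, and Proposition \ref{aff}. Your closing remarks on why $P$-regularity enters (separating the Levi-level characters through $\tW_P\subset\tW$) are also correct. But the exactness step has a genuine gap. The claim $R^{>0}\Gamma(M\ot V)=0$ cannot be obtained from Proposition \ref{aff}: since $V$ is a $G_q$-module, $M\ot V\cong M^{\dim V}$ as objects of $\OPG$ (the equivariant bundle attached to a restricted $G$-module is trivial), so this assertion is \emph{equivalent} to the statement $R^{>0}\Gamma(M)=0$ you are trying to prove; and if one instead argues through a filtration of $V$, the subquotients $M\ot\C_\psi=M(-\psi)$ for $\psi$ dominant are \emph{anti-ample} twists, to which Proposition \ref{aff}(5) says nothing. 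For the same reason your identification of the $\chi_{\lo,\lambda+\mu}$-summand ($\mu$ the \emph{highest} weight) as $\Gamma$-acyclic is backwards: that summand is $M(-\mu)=M\ot\C_\mu$, the anti-ample twist. The missing move is the paper's twist by the highest weight: one first embeds $M\hookrightarrow (M\ot V)(\mu_0)\cong M(\mu_0)^{\dim V}$ for $\mu_0\gg0$ a dominant $P$-character, splits off $R\Gamma(M)$ as the central-character-$\chi_{\lo,\lambda}$ piece, and then applies Proposition \ref{aff}(5) to the single coherent module $M$; no return trip through $V^*$ is needed (and the separation required for your reverse translation is not supplied by the lemmas as stated, since $\lambda+\mu$ translated back down need not be dominant).

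Two further points. First, for $P\neq B$ you cannot filter $V$ as a $P_q$-module by weight lines: the subquotients are the irreducible $L_q$-modules $V_P(\mu_i)$, and only the extreme weight space is a $P_q$-stable line, precisely when that extreme weight is a $P$-character --- this is why the paper assumes the highest (resp.\ lowest) weight of $V$ is a $P$-character in claim (a) (resp.\ (b)). Second, the same highest-versus-lowest confusion affects your conservativity argument: the summand that global generation (Proposition \ref{aff}(3),(4)) shows to be nonzero is $M(-\mu_n)$ for $\mu_n$ the \emph{lowest} weight (so that $-\mu_n\gg0$ is dominant), realized as a direct summand of the surjection $M\ot V\twoheadrightarrow M(-\mu_n)$ via Lemma \ref{Pregularweightlemma}(ii); with that correction your claim (b) matches the paper's.
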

\begin{proof}
Essentially taken from \cite{BB81}. Since $\Gamma(\DPl) = \Uql$,
which is a generator of the target category, the theorem will
follow from the following two claims:

$a)$ Let $\lambda$ be dominant. Then $\Gamma:  \DPGl \to \MOD(\Uql)$
is exact.

$b)$ Let $\lambda$ be dominant and $P$-regular and $M \in  \DPGl$,
then, if $\Gamma(M) = 0$, it follows that $M = 0$.

Let $V$ be a finite dimensional irreducible $G_q$-module and let
$$
0 = V_{-1} \subset V_{0} \subset \ldots \subset V_n = V
$$
be a filtration of $V$ by $P_q$-submodules, such that $V_i/V_{i-1}
\cong V_P(\mu_i)$ is an irreducible $P_q$-module. $M \ot V$ is a sheaf, i.e. an object of $\MOD(\Oq,P_q)$, equipped with the diagonal $\Uq$-action. Recall that $M
\ot V_P(\mu_i) \in \DPG$.

Assume that the highest weight $\mu_0$ of $V$ is a $P_q$-character. Then $V_0 = V_{P_q}(\mu_0) = \C_{\mu_0}$
 and we have $M \ot V_0 = M(-\mu_0)$ (see Section \ref{Vector bundles and line bundles} for these notations). Thus we get an embedding $M(-\mu_0)
\hookrightarrow M \ot V$, which twists to the embedding
$$M
\hookrightarrow (M \ot V)(\mu_0) \cong M^{\dim V}(\mu_0) = M(\mu_0)^{\dim V}.$$ Now, by
Lemmas \ref{Pregularweightlemma} i), \ref{Lemacita} and Theorem
\ref{thmtag0} iii) we get that this inclusion splits on derived
global sections, so $R\Gamma(M)$ is a direct summand of
$R\Gamma(M(\mu_0))^{\dim V}$. Now, for $\mu_0$ big enough and
$M$ $\Oq$-coherent we have $R^{>0}\Gamma(M(\mu_0)) = 0$, by
Proposition \ref{aff}. Hence, $R^{>0}\Gamma(M) = 0$ in this case.
A general $M$ is the union of coherent submodules and by a
standard limit-argument it follows that $R^{>0}\Gamma(M) = 0$.
This proves $a)$.

Now, for $b)$ we assume instead that the lowest weight $\mu_n$ of
$V$ is a $P$-character. Then we have a surjection $M^{\dim V}
\cong M\otimes V \to M(-\mu_n)$. Applying global sections and
using Lemmas \ref{Pregularweightlemma} ii), \ref{Lemacita} and
Theorem \ref{thmtag0} iv) we get that $\Gamma(M(-\mu_n))$ is a
direct summand of $\Gamma(M)^{\dim V}$. For $\mu_n$ small enough
we get that $\Gamma(M(-\mu_n)) \neq 0$. Hence, $\Gamma(M) \neq 0$.
This proves $b)$.
\end{proof}

\begin{theorem}\label{singlocthmtagGeneralized0} Let $q$ be generic and let $\lambda$ be dominant and $P$-regular then
$$\Gamma: \DPGwl \to \MOD_{\widehat{\lambda}}(\Uq)$$ is an equivalence
of categories.
\end{theorem}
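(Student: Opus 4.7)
The plan is to reduce this completed statement to the strict central character case, Theorem \ref{singlocthmtag0}, by filtering both categories according to central nilpotency order. For $M \in \DPGwl$ set
\[
M^{(n)} \ := \ \{\, m \in M \ : \ \alp(\Ker \chi_{\lo,\lambda})^n \cdot m = 0 \,\},
\]
so that by condition $(\widehat{4})$ we have $M = \bigcup_n M^{(n)}$ and each successive quotient $M^{(n)}/M^{(n-1)}$ lies in $\DPGl$. Analogously, every $N \in \Uq\MOD^{\widehat{\lambda}}$ carries an exhaustive filtration $N^{(n)}$ by $(\Ker \chi_\lambda)^n$-torsion whose subquotients lie in $\Uql\MOD$, and a direct check using Proposition \ref{alphatilde prop} shows that $\Gamma$ sends the first filtration into the second.

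First I would establish that $\Gamma$ is exact on $\DPGwl$. On each layer $M^{(n)}/M^{(n-1)} \in \DPGl$ higher cohomology vanishes by the argument used in the proof of Theorem \ref{singlocthmtag0}, so by the long exact sequence in sheaf cohomology and induction on $n$ one obtains $R^{>0}\Gamma(M^{(n)}) = 0$ for all $n$; since $\Gamma$ has finite cohomological dimension on $\MPG$ by Proposition \ref{aff}, higher direct images commute with the filtered colimit $M = \varinjlim_n M^{(n)}$, yielding $R^{>0}\Gamma(M) = 0$. Faithfulness ($\Gamma(M) = 0 \Rightarrow M = 0$) follows by the same inductive scheme: exactness forces $\Gamma(M^{(1)}) = 0$, hence $M^{(1)} = 0$ by the faithfulness half of Theorem \ref{singlocthmtag0}, and inductively $\Gamma(M^{(n)}/M^{(n-1)}) = 0$ forces $M^{(n)}/M^{(n-1)} = 0$ for every $n$.

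Given exactness and faithfulness of $\Gamma$ together with the adjunction $(\Loc, \Gamma)$, the equivalence will follow once both the unit $N \to \Gamma \Loc(N)$ and the counit $\Loc \Gamma(M) \to M$ are shown to be isomorphisms. Both maps respect the filtrations defined above, and on each associated graded piece they reduce to the corresponding maps from Theorem \ref{singlocthmtag0}, which are isomorphisms; a five-lemma induction then propagates the isomorphism to every $M^{(n)}$ and $N^{(n)}$, and passage to the union (respectively to the inverse limit) completes the argument.

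The principal technical obstacle will be verifying that $\Loc = \varprojlim_n (\DtP/(1 \otimes \Ker \chi_\lambda)^n) \otimes_{\Uq}(-)$ genuinely lands in $\DPGwl$ and respects the two filtrations: one must bound the $\alp(\Ker \chi_{\lo,\lambda})$-nilpotency order on $\DtP/(1 \otimes \Ker \chi_\lambda)^n \otimes_{\Uq} N^{(k)}$ in terms of $n$ and $k$. Tracing Proposition \ref{alphatilde prop} together with the compatibility between $\chi_\lambda$ and $\chi_{\lo,\lambda}$ encoded in the Harish-Chandra homomorphism of Section \ref{The center of Uq and the Harish-Chandra homomorphism} should supply the required bounds and make every filtration-compatibility statement precise.
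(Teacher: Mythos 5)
Your proposal is correct and is essentially the paper's own argument: the paper disposes of this theorem in one line as ``Theorem \ref{singlocthmtag0} and a simple devissage,'' and your filtration by $\alp(\Ker\chi_{\lo,\lambda})$-nilpotency order (resp.\ $(\Ker\chi_\lambda)$-torsion order), with the reduction of exactness, faithfulness, and the unit/counit isomorphisms to the associated graded pieces, is exactly the devissage intended. The technical point you flag about $\Loc$ landing in $\DPGwl$ is real but routine, and your sketch fills in the details the paper omits.
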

\begin{proof} This follows from Theorem \ref{singlocthmtag0} and
a simple devissage.
\end{proof}

\section{Appendix}

\subsection{Hopf algebras}\label{appendix} \subsubsection{} For general information we refer to \cite{M93}.
Let $H$ be a Hopf algebra over a commutative ring. We denote by
$\mu$, $\triangle$, $S$, $\iota$ and $\epsilon$ the product,
coproduct, antipode, unit and counit, respectively, on $H$. We
shall use Sweedler's notation and write $\triangle x = x_1 \ot
x_2$ for the coproduct of $x \in H$.

If $M$ is a right $H$-comodule we denote by $\triangle: M \to M
\ot H$ the coaction and write $\triangle m = m_1 \ot x_2$, for $m
\in M$. If $N$ is another right $H$-comodule we have the
\emph{diagonal} coaction of $H$ on $M \ot N$ defined as the
composition
$$
M \ot N \overset{\triangle \ot \triangle}{\longrightarrow} (M \ot
H) \ot (N \ot H) \overset{F_{23}}{\longrightarrow} (M \ot N) \ot
(H \ot H) \overset{1 \ot \mu}{\longrightarrow} (M \ot N) \ot H,
$$
where $F_{23}$ flips the 2'nd and 3'rd tensor.

\medskip

\noindent Let $R$ be an algebra equipped with a (left) $H$-module
structure. $R$ is called a \emph{module algebra} for $H$ if $x(r
\cdot r') = x_1(r) \cdot x_2(r')$, for $x \in H$ and $r,r' \in R$.
We can then define the \emph{smash-product algebra} $R \# H$. As a
vector space $R \# H = R \ot H$ and its associative multiplication
is defined by
$$
r \ot x \cdot r' \ot x' = rx_1(r') \ot x_2x'.
$$

\subsubsection{Adjoint action}\label{finite part of Hopf algebra} The left adjoint action $ad_l$ of $H$ on itself is given by
$ad_l(x)(y) = x_1 y S(x_2)$. Similarly, there is the right adjoint
action $ad_r$ of $H$ on itself which is defined by $ad_r(x)(y) =
S(x_1)yx_2$. It makes $R$ a right $H$-module algebra.

\subsubsection{An untwisting lemma}\label{An untwisting lemma}
Assume that $H$ is isomorphic to a Hopf subalgebra of $R$ and
consider the action of $H$ on $R$ which is the restriction of the
left adjoint action of $R$ on itself. Then we have
\begin{lemma}\label{ast untwist lemma} There is an algebra homomorphism
$$
f: H \to R \# H, \ f(x) = S(x_1) \ot x_2, x \in H.
$$
Moreover, $\Jm f$ commutes with $R \ot 1$ and $f$ induces an
algebra isomorphism $1_{R} \ot f: R \ot H \overset{\sim}{\to} R \#
H$.
\end{lemma}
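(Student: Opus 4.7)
The claim bundles three assertions: (A) $f$ is an algebra homomorphism, (B) $\Jm f$ commutes with $R \ot 1$ inside $R \# H$, and (C) the induced map $1_R \ot f : R \ot H \to R \# H$, sending $r \ot x$ to $r S(x_{(1)}) \ot x_{(2)}$, is an algebra isomorphism when $R \ot H$ is given its ordinary tensor product algebra structure. My plan is a direct Sweedler-notation computation hinged on the antipode identity
\begin{equation*}
\sum S(x_{(1)}) x_{(2)} \ot x_{(3)} \; = \; 1 \ot x \;\in\; H \ot H,
\end{equation*}
which follows from $\mu(S \ot \mathrm{id})\Delta = \iota \epsilon$ and coassociativity.

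For (A), I would expand $f(x) f(y) = (S(x_{(1)}) \ot x_{(2)})(S(y_{(1)}) \ot y_{(2)})$ via the smash product rule $(a \ot h)(b \ot g) = a(h_{(1)} \cdot b) \ot h_{(2)} g$. Since the $H$-action on $R$ is the restriction of the adjoint action of $R$ on itself and $H \subseteq R$ is a Hopf subalgebra, the adjoint action unfolds as $x_{(2)} \cdot S(y_{(1)}) = x_{(2)} S(y_{(1)}) S(x_{(3)})$ computed inside $R$. Promoting Sweedler indices to $\Delta^3 x$, the product becomes $S(x_{(1)}) x_{(2)} S(y_{(1)}) S(x_{(3)}) \ot x_{(4)} y_{(2)}$. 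Applying $\mathrm{id} \ot S \ot \mathrm{id}$ to the antipode identity gives $\sum S(x_{(1)}) x_{(2)} \ot S(x_{(3)}) \ot x_{(4)} = 1 \ot S(x_{(1)}) \ot x_{(2)}$, and because $S(y_{(1)})$ is independent of $x$ the $x$-sum may be taken first, producing $S(y_{(1)}) S(x_{(1)}) \ot x_{(2)} y_{(2)} = S((xy)_{(1)}) \ot (xy)_{(2)} = f(xy)$. The verification of (B) is formally identical: expanding $f(x)(r \ot 1)$ gives $S(x_{(1)}) x_{(2)} r S(x_{(3)}) \ot x_{(4)}$, and the same collapse yields $r S(x_{(1)}) \ot x_{(2)} = (r \ot 1) f(x)$.

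For (C), I would exhibit the explicit two-sided inverse $\phi : R \# H \to R \ot H$ given by $\phi(r \ot x) = r x_{(1)} \ot x_{(2)}$; both compositions of $\phi$ with $1_R \ot f$ reduce, via the antipode axioms $\sum x_{(1)} S(x_{(2)}) = \epsilon(x) = \sum S(x_{(1)}) x_{(2)}$ together with the counit property, to the identity. Once bijectivity is in hand, (A) and (B) guarantee that $1_R \ot f$ is an algebra homomorphism out of $R \ot H$ with its tensor product multiplication (since $f(H)$ and $R \ot 1$ then commute in $R \# H$), hence an algebra isomorphism. The chief obstacle I foresee is index bookkeeping: each smash-product multiplication and each unfolding of the adjoint action introduces a fresh application of $\Delta$, so labels must be promoted through $\Delta^2 x$ and $\Delta^3 x$ carefully, and one must justify that an insertion like $S(y_{(1)})$ or $r$ sitting inside a product of the form $S(x_{(1)}) x_{(2)} \cdot (-) \cdot S(x_{(3)})$ can be factored out of the $x$-sum before the antipode collapse is invoked; this works because, for fixed values of the non-$x$ insertions, the summand depends linearly on the triple $S(x_{(1)}) \ot x_{(2)} \ot S(x_{(3)}) \ot x_{(4)}$.
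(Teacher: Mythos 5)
Your proposal is correct and follows essentially the same route as the paper's proof: the same Sweedler computation with the antipode collapse $S(x_1)x_2\ot\cdots=\iota\epsilon(x_1)\ot\cdots$ to verify both the homomorphism property and the commutation with $R\ot 1$, and the same explicit inverse $r\ot x\mapsto rx_1\ot x_2$ for the third assertion.
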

\begin{proof} For the first assertion, let $x,y \in H$.
Then
$$
f(x) \cdot f(y) = S(x_1) \ot x_2 \cdot S(y_1) \ot y_2 = S(x_1)
ad_l(x_{2})(S(y_1))\ot x_{3}y_2 =
$$
$$
S(x_{1}) ad_l(x_{2})(S(y_1)) \ot x_3y_2 =
S(x_{1})x_{2}S(y_1)S(x_{3}) \ot x_4y_2 =
$$
$$
\iota \circ \epsilon(x_{1})S(y_1)S(x_{2}) \ot x_3y_2 =
S(y_1)S(x_1) \ot x_1x_2 = f(xy).
$$
For the second assertion, let $r \ot 1 \in R \ot 1$ and $x \in H$.
Then
$$
f(x) \cdot r \ot 1 = S(x_1) ad_l(x_{2})(r) \ot x_{3} = S(x_{1})
x_{2} r  S(x_{3}) \ot x_{4} =
$$
$$
\iota \circ \epsilon(x_{1}) r S(x_{2}) \ot x_3 =rS(x_1) \ot x_2 =
r \ot 1 \cdot f(x).
$$
This implies that $1_{R} \ot f$ is an algebra homomorphism; its
inverse is given by $$R \# H \ni r \ot x \mapsto rx_1 \ot x_2 \in
R \ot H.$$
\end{proof}
Let $\Z(H)$ denote the center of (the underlying algebra of) $H$.
\begin{cor}\label{ast untwist cor} $f$ induces an algebra embedding $\overline{f}:
\Z(H) \to \Z(R \# H)$. $1 \ot f$ induces an isomorphism $\Z(R) \ot
\Z(H) \overset{\sim}{\to} \Z(R \# H)$. In particular, if $\Z(R) =
\C$, then $\overline{f}$ is an isomorphism.
\end{cor}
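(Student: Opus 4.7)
The plan is to bootstrap everything from the already-proved Lemma \ref{ast untwist lemma}, which supplies the algebra isomorphism $1_R \ot f: R \ot H \overset{\sim}{\to} R \# H$. Since an algebra isomorphism carries centers to centers, we immediately obtain
$$
(1_R \ot f)\bigl(\Z(R \ot H)\bigr) = \Z(R \# H).
$$
So the entire statement reduces to identifying $\Z(R \ot H)$ with $\Z(R) \ot \Z(H)$ and then tracing what $1_R \ot f$ does to the distinguished subspaces $\C \ot \Z(H)$ and $\Z(R) \ot \Z(H)$.

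The identification $\Z(R \ot H) = \Z(R) \ot \Z(H)$ is a standard fact for algebras over a field (here $\C$): the containment $\supseteq$ is obvious, and the reverse containment is a short calculation using a vector-space basis of one factor — writing any central element of $R \ot H$ in the form $\sum r_i \ot h_i$ with $h_i$ linearly independent and commuting it past $r \ot 1$ and $1 \ot h$ forces each $r_i \in \Z(R)$ and each $h_i \in \Z(H)$. Combining this with the isomorphism from the lemma yields the middle assertion of the corollary.

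For the first assertion, I would observe that $\overline{f}$ is the composition
$$
\Z(H) \overset{\sim}{\to} \C \ot \Z(H) \hookrightarrow \Z(R) \ot \Z(H) \overset{1_R \ot f}{\longrightarrow} \Z(R \# H),
$$
because $(1_R \ot f)(1 \ot z) = 1 \cdot f(z) = f(z)$ for $z \in \Z(H)$. This is visibly an algebra homomorphism into the center, and it is injective since $f$ itself is injective (applying $\epsilon \ot \mathrm{id}$ to $f(x) = S(x_1)\ot x_2$ returns $x$ via the counit axiom). The final "in particular" is then immediate: if $\Z(R) = \C$, the intermediate inclusion $\C \ot \Z(H) \hookrightarrow \Z(R) \ot \Z(H)$ is an equality, so $\overline{f}$ equals (up to the canonical identification $\C \ot \Z(H) = \Z(H)$) the isomorphism produced by $1_R \ot f$.

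No step here is really an obstacle — the only mildly non-trivial input is $\Z(R \ot_\C H) = \Z(R) \ot_\C \Z(H)$, which is standard over a field. Everything else is formal manipulation of the isomorphism supplied by Lemma \ref{ast untwist lemma} and the formula $f(x) = S(x_1) \ot x_2$.
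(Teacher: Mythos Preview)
Your proof is correct and is exactly the argument the paper has in mind: the corollary is stated without proof precisely because it is an immediate consequence of Lemma~\ref{ast untwist lemma} together with the standard identity $\Z(R \otimes_\C H) = \Z(R) \otimes_\C \Z(H)$, which is how you argue. (One tiny quibble: in your sketch of that identity, after showing each $r_i \in \Z(R)$ you should re-express the sum with the $r_i$'s linearly independent before concluding $h_i \in \Z(H)$; but this is routine and does not affect the argument.)
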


\subsection{Quantizing $\U(\uo)$}\label{Quantizing U(uo)} A canonical quantization $\Uq(\uo)$ was
defined in \cite{G11}. The following properties of it was proved
in \cite{G11} for a generic $q$. We shall prove them for any $q$
by modifying his methods.
\begin{proposition-definition}\label{Ur prop def} There are subalgebras $\Uq(\uo) \subseteq
\Uq(\bo)$ and $\Uq(\uobar) \subseteq \Uq(\overline{\bo})$ such
that the following holds:

\emph{i)} Multiplication define linear isomorphisms $\Uq(\uobar)
\ot \Uq(\lo) \isoto \Uq(\overline{\p})$, $\Uq(\lo) \ot \Uq(\uo)
\isoto \Uq({\p})$ and
 $\Uq(\uobar)
\ot \Uq(\lo) \ot \Uq(\uo) \isoto \Uq$ (parabolic triangular
decomposition).

\emph{ii)} $\Uq(\uo)$ and $\Uq(\uobar)$ are integrable
$ad_r(\Uq(\lo))$-modules.

\emph{iii)} $\Uq(\uo)$ is a left coideal in $\Uq(\bo)$.

\emph{iv)} $\Uq(\uo)$ specializes to $\U(\uo)$ at $q=1$.
\end{proposition-definition}
\begin{proof} The Majid-Radford theorem, \cite{Maj,
Rad}, implies the following: Let $\pi: H \to H_0$ be a split
projection of Hopf algebras (i.e. there exists a Hopf algebra
injection $\iota: H_0 \to H$ such that $\pi \circ \iota = Id$.)
Put $B := B(H,H_0) = \{x \in H; \pi(x_1) \ot x_2 = 1 \ot x\}$.
Then multiplication defines an isomorphism $H_0 \ot B \isoto H$.
Observe that $B$ is automatically stable under the right adjoint
action of $H_0$ on $H$ and that $B$ is a left coideal in $H$.

Note that if $H = \oplus_{n \in \mathbb{N}} H_{n}$ is an
$\mathbb{N}$-graded Hopf algebra, then the projection $\pi: H \to
H_0$ is split.

Assume that $H$ and $H_0$ are Hopf algebras over $\A$. Then we see
that the construction of $B$ above commutes with every
specialization $t \to q$. Because, if we let $B_q =
B(H_q,H_{0,q})$ and $B_{t \mapsto q} = B(H,H_0)_q$, we clearly
have $B_{t\mapsto q} \subseteq B_q$ and, since $H_{0q} \ot B_q =
H_q = H_{0q} \ot B_{t\mapsto q}$, we get $B_{t\mapsto q} = B_q$.

\medskip

Let $\UA(\p)'$ be the subalgebra of $\UAres(\p)$ generated by
$\UA(\p)$ and $\UAres(\lo)$. Consider on $\UA(\p)$ (resp. on
$\UA(\p)'$) the grading for which $\deg \Uq(\lo) = 0$ (resp. $\deg
\UAres(\lo) = 0$) and $\deg E_\beta = 1$, for $\beta \in \Delta
\setminus \Delta_P$. Let $B := B(\UA(\p), \UA(\lo))$ and $B' :=
B(\UA(\p)', \UAres(\lo))$. Since
$$
\UAres(\lo) \ot B' = \UAres(\p) = \UAres(\lo) \ot_{\UA(\lo)}
\UA(\p) =  \UAres(\lo) \ot_{\UA(\lo)} (\UA(\lo) \ot B) =
\UAres(\lo) \ot B.
$$
and, evidently, $B \subseteq B'$ we get $B=B'$. This implies that
$B$ is stable under the right adjoint action of $\UAres(\lo)$. We
shall next prove that this action is integrable:

Observe that $K_{-\beta}E_\beta \in B(\UA(\p), \UA(\lo))$. We have
(using the Serre relations) that
$$
ad^{1-\langle \alpha, \beta
\rangle}_r(E_\alpha)(K_{-\beta}E_\beta) =
ad_r(F_\alpha)(K_{-\beta}E_\beta) = 0, \ \alpha \neq \beta \in
\Delta.
$$
This implies that $K_{-\beta}E_\beta \in \UA^\lf$, for $\beta \in
\Delta \setminus \Delta_P$. Then $B$ is generated as an algebra by
the $\UAres(\p)$-module generated by $K_{-\beta}E_\beta$, $\beta
\in \Delta \setminus \Delta_P$. This follows from an induction
similar to the one given in the proof of Theorem 4.1 in
\cite{G11}. We have omitted the details here.

Thus we have proved that $\UAres(\lo)$-module structure integrates
to an $L_\A$-module structure on $B$. Putting $\Uq(\uo) := B_q$,
we get that $\Uq(\uo)$ is an $L_q$-module for which the first
isomorphism of $i)$ holds. Similarly, we construct an $L_q$-module
$\Uq(\uobar) \subset \Uq(\overline{\bo})$ such that the second
isomorphism of $i)$ holds. The third isomorphism of $i)$ follows
from the first two.

$ii)$ and $iii)$ are already proved. By a computation we have
$\U(\uo) = B(\U(\p), \U(\lo))$, which, together with the fact that
$B$ commutes with specializations, proves $iv)$.
\end{proof}
It follows from the constructions that $\Uq(\uo)$ and
$\Uq(\overline{\uo})$ are Hopf-algebras in the braided tensor
category of modules over the Drinfel'd double of $\Uq(\lo)$. But
they are not Hopf-subalgebras of $\Uq(\bo)$, resp. of
$\Uq(\overline{\bo})$, in the usual sense, i.e. they are not
closed under the coproduct, not even for $\uo = \n$.
\begin{cor}\label{finite parabolic deco}
Multiplication define linear isomorphisms $\Uq(\uobar) \ot
\Uq(\lo)^\fin \isoto \Uq(\overline{\p})^\lf$, $\Uq(\lo)^\fin \ot
\Uq(\uo) \isoto \Uq({\p})^\lf$ and
 $\Uq(\uobar)
\ot \Uq(\lo)^\fin \ot \Uq(\uo) \isoto \UqP$.
\end{cor}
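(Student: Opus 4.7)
\medskip

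The plan is to apply the $\ad_r(\Uq(\lo))$-integrable-part functor $(-)^{\lo\text{-}\fin}$ to each of the three linear isomorphisms in Proposition-Definition \ref{Ur prop def}(i). Since $\Uq$ is a right $\Uq$-module algebra under $\ad_r$, multiplication $\Uq \ot \Uq \to \Uq$ is $\Uq(\lo)$-equivariant provided one equips the source with the diagonal $\ad_r$-action; consequently each of the multiplication isomorphisms in part (i) is an isomorphism of $\ad_r(\Uq(\lo))$-modules, with the diagonal action on each tensor product.

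The main ingredient is the following Hopf-algebraic lemma: for any right $\Uq(\lo)$-module $M$ under $\ad_r$ and any \emph{integrable} right $\Uq(\lo)$-module $N$, the inclusion
$$
M^{\lo\text{-}\fin} \ot N \ \hookrightarrow \ (M \ot N)^{\lo\text{-}\fin}
$$
(with diagonal action on the right) is an equality, and likewise for $N \ot M$. Granting this lemma, the first two isomorphisms of the corollary follow by a single application, invoking Proposition-Definition \ref{Ur prop def}(ii) to know that $\Uq(\uobar)$ and $\Uq(\uo)$ are integrable. The three-fold decomposition $\Uq(\uobar) \ot \Uq(\lo) \ot \Uq(\uo) \isoto \Uq$ requires iterating the lemma, pulling both integrable flanks outside $(-)^{\lo\text{-}\fin}$, to obtain $\Uq(\uobar) \ot \Uq(\lo)^\fin \ot \Uq(\uo) \isoto \UqP$.

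To prove the lemma, the inclusion $\supseteq$ is immediate: if $m \in M^{\lo\text{-}\fin}$ has $L_q$-orbit inside a finite-dimensional $L_q$-stable subspace $V \subseteq M$ and $n \in N$ has orbit inside a corresponding $W \subseteq N$, then $V \ot W$ is a finite-dimensional $L_q$-stable subspace containing $m \ot n$. For the opposite inclusion, decompose $N = \bigcup_\alpha N_\alpha$ as a union of finite-dimensional integrable $L_q$-subreps; any $z \in (M \ot N)^{\lo\text{-}\fin}$ lies in some $M \ot N_\alpha$, so we may assume $N$ itself is finite-dimensional integrable. In that case the contraction
$$
M \ot N \ot N^* \to M, \qquad m \ot n \ot \phi \mapsto \phi(n)\, m,
$$
is $\Uq(\lo)$-equivariant for the diagonal action on the left, using the contragredient integrable action on $N^*$. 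For any $\phi \in N^*$ the element $(1 \ot \phi)(z) \in M$ therefore has finite $L_q$-orbit, i.e.\ lies in $M^{\lo\text{-}\fin}$; running $\phi$ through a dual basis of $N$ recovers the $M$-coefficients of $z$, proving $z \in M^{\lo\text{-}\fin} \ot N$.

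The only non-formal step is the $\subseteq$ direction of the general lemma, which hinges on the existence and integrability of duals of finite-dimensional integrable $L_q$-modules; once it is in place the corollary follows at once from Proposition-Definition \ref{Ur prop def}.
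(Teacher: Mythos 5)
Your proposal is correct, but it follows a genuinely different route from the paper. The paper proves the third isomorphism directly: injectivity comes from Proposition-Definition \ref{Ur prop def}(i)--(ii), and for surjectivity one decomposes $v \in \UqP$ as $\sum x_i \ot y_j \ot z_k$ into linearly independent weight vectors, assumes some $y_{j_0} \notin \Uq(\lo)^\fin$, picks the lowest-weight $x_{i_0}$, and checks that $ad_r(E_\alpha)^s(v)$ contains an uncancelled leading term $K^{-s}x_{i_0}\ot E^s y_{j_0}\ot z_{k_0}$, contradicting local finiteness of $ad_r(E_\alpha)$ on $v$. You instead isolate a general projection formula $(M\ot N)^{\lo\text{-}\fin}=M^{\lo\text{-}\fin}\ot N$ for $N$ integrable, proved by contracting against the integrable dual $N^*$, and then simply apply $(-)^{\lo\text{-}\fin}$ to the $ad_r(\Uq(\lo))$-equivariant isomorphisms of Proposition-Definition \ref{Ur prop def}(i). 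Your approach is cleaner and treats all three isomorphisms uniformly, at the price of invoking duality of finite-dimensional integrable $L_q$-modules and the equivariance of evaluation (both standard); the paper's argument is more elementary and stays inside the PBW picture, but requires the slightly delicate ``isn't cancelled'' claim. One point to watch: at a root of unity ``integrable'' is stronger than ``locally finite'' (it asks for an $L_q$-comodule structure, equivalently local finiteness of the divided powers in $\Uqres(\lo)$), so to conclude that the coefficients $m_j=(1\ot\phi_j)(z)$ are genuinely integrable you should run the contraction argument with $\Uqres(\lo)$-equivariance, which holds since multiplication on $\Uq$ is $\Uqres$-equivariant for $ad_r$; the paper's own proof elides the same distinction, so this does not put you at a disadvantage.
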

\begin{proof} Let us prove the third isomorphism, the others are similar. We assume for simplicity that $q$ is generic (at a root of unity one should
use $\A$-forms in the argument below). Note that by $i)$ and $ii)$
of Proposition-Definition \ref{Ur prop def} we see that
multiplication defines an embedding $\Uq(\uobar) \ot \Uq(\lo)^\fin
\ot \Uq(\uo) \to \UqP$. We must show it is surjective.

Let $v \in \UqP$. We can decompose $v = \sum x_i \ot y_j \ot z_k$
according to the isomorphism of Proposition-Definition \ref{Ur
prop def} $ii)$ where we can assume that the $x_i$'s $y_j$'s and
$z_k$'s are linearly independent weight vectors. We must show that
each $y_j \in \Uq(\lo)^\fin$.

Assume in order to get a contradiction that there is a $j_0$ such
that $y_{j_0} \in \Uq(\lo) \setminus \Uq(\lo)^\fin$. Thus there is
WLOG an $E = E_\alpha$ such that $ad_E := ad_r(E)$ is not locally
finite on $y_{j_0}$; this implies that for all $s \geq 1$ we have
$$(*) \ \ \ \ \ ad_E^{s}(y_{j_0}) \notin
\operatorname{Span}\{ad_E^{t}(y_{j_0}); t < s\}.$$ We claim that
this implies that $ad_E$ is not locally finite on $v$. By
subtracting all summands $x_i \ot y_j \ot z_k$ for which $ad_E$ is
locally finite on $y_j$ we can assume that $ad_E$ is not locally
finite on any $y_j$ and hence we can assume that there is a vector
$x_{i_0} \ot y_{j_0} \ot z_{k_0}$ such that $x_{i_0}$ has lowest
weight among all the $x_i$'s. But then $ad^{s}_E(v)$ contains a
term $K^{-s}x_{i_0} \ot E^{s} y_{j_0} \ot z_{i_0}$, where $K =
K_\alpha$, which by $(*)$ isn't cancelled by the other terms. This
gives the desired contradiction.
\end{proof}

\begin{rems}\label{rems} A) It follows from the proof of Proposition-Definition \ref{Ur prop def} that the case $\n = \uo$ gives
$\Uq(\n) \overset{def}{=} \C \langle K_{-\alpha} E_\alpha; \alpha
\in \Delta \rangle$. This definition is not the standard one:
usually one takes $\Uq(\n)$ to be $\C \langle E_\alpha; \alpha \in
\Delta \rangle$. It follows however from the Serre relations that
our $\Uq(\n)$ is isomorphic to the latter algebra.

\medskip

\noindent B) Observe that $\Uq(\uo)_{>0}$ annihilates every finite
dimensional irreducible representation of $\Uq(\p)$. Moreover,
$\Uq(\bo) \cdot \Uq(\uo)_{>0}$ is generated as a left
$\Uq(\bo)$-ideal by $E_\alpha$ and $E_\alpha E_\beta$, for $\alpha
\in \Delta \setminus \Delta_P$ and $\beta \in \Delta_P$.

\medskip

\noindent C) The result of Corollary \ref{finite parabolic deco}
is optimal in the sense that it is impossible to construct a
$\p$-parabolic triangular decomposition of $\Uq^{\lo'\text{-fin}}$
for $\lo'$ a Levi such that $\lo \subsetneqq \lo'$.
\end{rems}

\end{document}